\documentclass[10pt]{amsart}

\usepackage[a4paper,margin=2.5cm]{geometry}
\usepackage{amsmath,amssymb,amsthm,mathtools,mathrsfs}
\usepackage[T1]{fontenc}
\usepackage[utf8]{inputenc}
\usepackage{lmodern}
\usepackage{microtype}
\usepackage{hyperref}
\usepackage{enumitem}

\hypersetup{
  colorlinks=false,
  linkbordercolor={1 0 0},
  citebordercolor={0 1 0},
  urlbordercolor={0 1 1}
}

\newtheoremstyle{spacedplain}%
  {3pt plus 1pt minus 1pt}%
  {3pt plus 1pt minus 1pt}%
  {\itshape}%
  {}%
  {\bfseries}%
  {.}%
  {.5em}%
  {}

\newtheoremstyle{spaceddefinition}%
  {\medskipamount}%
  {\medskipamount}%
  {}%
  {}%
  {\bfseries}%
  {.}%
  {.5em}%
  {}

\theoremstyle{spacedplain}
\newtheorem{theorem}{Theorem}[section]
\newtheorem{proposition}[theorem]{Proposition}
\newtheorem{lemma}[theorem]{Lemma}
\newtheorem{corollary}[theorem]{Corollary}

\theoremstyle{spaceddefinition}

\newcommand{\N}{\mathbb{N}}
\newcommand{\dd}{\,\mathrm{d}}
\newcommand{\z}{\zeta}
\newcommand{\1}{\mathbf{1}}
\newcommand{\Om}{\Omega}
\newcommand{\om}{\omega}
\newcommand{\Pplus}{P^{+}}
\newcommand{\eps}{\varepsilon}

\allowdisplaybreaks[2]

\title{Chebyshev Bias for Largest Prime Factors}

\author{Nilotpal Kanti Sinha}
\address{Dubai, UAE}
\email{nilotpal.sinha@gmail.com}
\date{}

\keywords{largest prime factor, Chebyshev bias, almost primes, Dirichlet
$L$-functions, explicit formula, multidimensional Perron formula}
\subjclass[2020]{11A41, 11N37, 11M06, 11N25, 11N60}

\begin{document}

\begin{abstract}
Let $P^+(n)$ be the largest prime factor of $n$, and let $\chi=\chi_{-4}$ be
$1$ on primes $1\pmod4$ and $-1$ on primes $3\pmod4$.  For fixed $k\ge2$ we
study
\[
 D_k(x)=\sum_{\substack{n\le x\\ \Omega(n)=k}}\chi(P^+(n)).
\]
Thus $D_k(x)$ compares the two residue classes according to the largest prime
factor of integers having exactly $k$ prime factors, counted with
multiplicity.  Assuming RH for $\zeta(s)$ and $L(s,\chi)=\beta(s)$, we prove a
pointwise explicit formula.  The main term is a fixed negative contribution
plus an absolutely convergent oscillating sum over the zeros
$\rho=\frac12+i\gamma$ of $L(s,\chi)$.  The ordering of the prime factors
produces $k$ Perron denominators, and the principal coefficient of a zero is
$O_k((1+|\gamma|)^{-k})$.  We then show that the total size of all zero terms
is strictly smaller than the fixed contribution.  Hence $D_k(x)<0$ for all
sufficiently large $x$.  The analogous problem without fixing $k$ is still
open.
\end{abstract}

\maketitle

\section{Introduction}\label{sec:intro}

For an integer $n\ge2$, let $\Pplus(n)$ be its largest prime factor.  We write
$\Om(n)$ for the number of prime factors counted with multiplicity and
$\om(n)$ for the number of distinct prime factors.  For example,
$\Om(12)=3$ but $\om(12)=2$ because $12=2^2\cdot3$.  Throughout,
\[
 \chi=\chi_{-4},\qquad
 \chi(n)=
 \begin{cases}
  0,&2\mid n,\\
  1,&n\equiv1\pmod4,\\
 -1,&n\equiv3\pmod4.
 \end{cases}
\]
For fixed $k\ge1$ we study
\begin{equation}\label{eq:Dk-def}
 D_k(x):=\sum_{\substack{n\le x\\ \Om(n)=k}}
 \chi\bigl(\Pplus(n)\bigr).
\end{equation}
Thus $D_k(x)$ is simply a signed count.  Each integer contributes $+1$ when
its largest odd prime factor is $1\pmod4$, contributes $-1$ when it is
$3\pmod4$, and contributes $0$ when the largest prime factor is $2$.
Therefore $D_k(x)<0$ means that the class $3\pmod4$ occurs more often in this
fixed-$k$ layer.

The corresponding sum over all integers is
\[
 D(x):=\sum_{2\le n\le x}\chi\bigl(\Pplus(n)\bigr).
\]
Earlier work shows that, if one only looks at the leading term, the two odd
classes $1$ and $3$ modulo $4$ occur equally often as largest prime factors;
see \cite{Oon04,BanksHarmanShparlinski05,DeKoninckKatai11,PollackSinghaRoy23}.
What remains is the much smaller difference between the two counts.  Numerically, however, the race shows a striking persistent bias: for
$n\le 3\times10^{10}$, there are $14{,}970{,}975{,}209$ integers whose
largest prime factor is $1\pmod 4$ and $15{,}029{,}024{,}755$ whose
largest prime factor is $3\pmod 4$, with no tie or lead by the
$1\pmod 4$ class observed in this range \cite{SinhaMO22}. Even the
sign of the unrestricted difference $D(x)$ is unknown; it appears as
Problem~18 in the Comparative Prime Number Theory problem list
\cite{SinhaMO22,HamiehEtAl24}.  We therefore fix $k$.  In this thinner set of
integers, the secondary term can be described pointwise.

The statistic here is not the usual prime-race statistic.  In the work of
Ford--Sneed \cite{FordSneed10} and Meng \cite{Meng18}, one studies the residue
class of the whole product of primes.  Here only the \emph{largest} prime
factor is tested.  The distinction is important because the primes can no
longer be handled symmetrically: the largest one must remain distinguished.

The form of the conclusion also differs.  Ford and Sneed obtain a limiting
distribution for a semiprime race under an extended
Riemann hypothesis and a linear-independence assumption, while Meng uses a
one-variable generating series and Hankel contours for fixed almost-prime
layers.  Our result gives a formula for each sufficiently large $x$, not only
an averaged or limiting statement.  Its complete zero contribution converges
absolutely, so no linear-independence assumption on zero ordinates is needed.
To the best of our knowledge, no earlier pointwise explicit formula treats a
fixed-$\Om$ layer with a character applied specifically to its largest prime
factor.

The ordering is the main analytic complication, but it also supplies the
decay needed in the proof.  For the moment suppose the prime factors are
distinct and write them as $p_1<\cdots<p_k$.  Perron's formula is a standard
way of turning an inequality into a complex contour integral.  We use one
Perron variable for the product condition $p_1\cdots p_k\le x$ and another
$k-1$ variables to enforce $p_1<\cdots<p_k$.  These $k$ variables produce
$k$ factors in the denominator of the contour kernel.  When a zero
$\rho=\frac12+i\gamma$ of $L(s,\chi)$ is encountered, those denominators make
its coefficient decay like $O_k(|\gamma|^{-k})$.  Because the number of zeros
up to height $T$ is only $O(T\log T)$, the complete zero series is absolutely
convergent for every $k\ge2$.  This denominator effect is the central
structural idea of the paper.

From now on, RH means that the nontrivial zeros of the function in question
lie on the line $\Re s=1/2$.  We use standard asymptotic notation:
$A\ll_k B$ or $A=O_k(B)$ means $|A|\le C_kB$ for a constant depending only
on $k$.  Write $\rho=\frac12+i\gamma$ for the distinct
nontrivial zeros of $L(s,\chi)=\beta(s)$, and let $m_\rho$ be their
multiplicities.  For $k\ge2$, put
\begin{equation}\label{eq:B-C-def}
 B_k:=\frac{2^{k-1}k^{2k-1}}{(k-1)!(2k-1)},\qquad
 C_k(z):=\frac{k^{2k-1}}
 {(k-1)!(k-1+z)(1-z)^{k-1}}.
\end{equation}

\begin{theorem}\label{thm:main-intro}
Assume RH for both $\zeta(s)$ and $L(s,\chi)=\beta(s)$.  For every fixed $k\ge2$,
\begin{equation}\label{eq:main-intro}
 D_k(x)=-
 \frac{x^{1-\frac1{2k}}}{(\log x)^k}
 \left\{
 B_k+2\Re\sum_{\gamma>0}
 m_\rho C_k(\rho)x^{i\gamma/k}
 \right\}
 +O_k\left(\frac{x^{1-\frac1{2k}}}{(\log x)^{k+1}}\right).
\end{equation}
The zero series converges absolutely and uniformly as a function of $\log x$.
\end{theorem}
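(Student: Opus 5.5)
We would begin by reducing to squarefree-type configurations. Write $n=p_1p_2\cdots p_k$ with $p_1\le\cdots\le p_k=\Pplus(n)$. The terms with a repeated prime among $p_1,\dots,p_{k-1}$, or with $p_{k-1}=p_k$, are counted by an almost-prime count of lower dimension. A trivial bound gives these $O_k(x^{1-1/(2k)}(\log x)^{-k-1})$ once we check that such terms cannot concentrate near the balanced point $p_i\approx x^{1/k}$. So it suffices to treat
\[
 D_k^*(x)=\sum_{p_1<\cdots<p_k,\ p_1\cdots p_k\le x}\chi(p_k).
\]
We encode the ordering through the ratios $u_j=p_{j+1}/p_j>1$ for $j\le k-1$, and the size condition through $p_1\cdots p_k\le x$. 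This gives a $k$-fold Perron integral. The variable $s$ is attached to $x$, and $w_1,\dots,w_{k-1}$ detect $p_j<p_{j+1}$. The integrand is a product of prime sums $P(s_j)=\sum_p p^{-s_j}$ at $k-1$ linear forms $s_j$ in $(s,w)$, times one twisted sum $P_\chi(s_k)=\sum_p\chi(p)p^{-s_k}$, all divided by $s\,w_1\cdots w_{k-1}$. We then use the standard identities
\[
 P(s)=\log\zeta(s)-\tfrac12 P(2s)+\cdots,\qquad
 P_\chi(s)=\log L(s,\chi)-\tfrac12\log\zeta(2s)+\cdots.
\]
The second of these is where the fixed bias enters: $\chi^2$ is principal, so $\zeta(2s)$ has its pole at $s=\tfrac12$.

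Next we shift contours. The untwisted factors carry logarithmic singularities at $s_j=1$, coming from $\log\zeta$. The twisted factor has no singularity at $s_k=1$, but it has a logarithmic singularity at $s_k=\tfrac12$ from $-\tfrac12\log\zeta(2s_k)$, and singularities $m_\rho\log(s_k-\rho)$ at each zero $\rho$. RH for $\zeta$ and $\beta$ guarantees that everything else lies to the left of $\Re s_k=\tfrac12$ and of $\Re s_j=1$.

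The dominant configuration is the balanced one, with all $p_j\approx x^{1/k}$. There the singularities $s_j=1$ for $j<k$ and $s_k=\tfrac12$ (resp.\ $s_k=\rho$) force
\[
 \Re s=\frac{(k-1)+\frac12}{k}=1-\frac1{2k},
\]
which produces the size $x^{1-1/(2k)}$. We evaluate each such singular point by Hankel contours in all $k$ variables. Each of the $k$ logarithms contributes one factor $(\log x)^{-1}$ at leading order, which gives $(\log x)^{-k}$.

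The leading coefficient is a rational function of the location $z$ of the twisted singularity. It comes from the residue of the Perron kernel $1/(s\,w_1\cdots w_{k-1})$ evaluated at the corresponding point. We expect this computation to produce exactly $C_k(z)$, with $s=(k-1+z)/k$ and $w_j$ differences proportional to $1-z$. As a consistency check, the constant $B_k$ should arise from the $s_k=\tfrac12$ singularity: the factor $-\tfrac12\log\zeta(2s)$ has doubled derivative scale, which accounts for the $2^{k-1}$, and the result should agree with $C_k(\tfrac12)$ up to that normalisation. This is a bookkeeping computation that we would verify directly rather than derive abstractly.

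The main obstacle is controlling the remaining contour pieces uniformly, so that the error is $O_k(x^{1-1/(2k)}(\log x)^{-k-1})$ and the zero series converges absolutely and uniformly.

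For the error, we take secondary terms in the Hankel expansions, which lose one power of $\log x$. We then move the horizontal and vertical segments to $\Re s_j=1-\delta/\log T$ type contours. On these we use RH-conditional bounds $\log\zeta,\log L\ll\log\log T$, with truncation heights chosen as powers of $x$.

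For the zeros, we show that the Hankel contribution at $\rho$ is $m_\rho C_k(\rho)x^{\rho/k}\,(\log x)^{-k}(1+O_k((1+|\gamma|)/\log x))$. It is essential that the error factor carries enough decay in $\gamma$. We would get this by keeping the kernel $1/(w_1\cdots w_{k-1})$, which is of size $|\gamma|^{-(k-1)}$ near $\rho$, inside the error estimate, together with $1/s$. This gives $O_k(|\gamma|^{-k})$ for each zero. Combined with $N(T)\ll T\log T$ and $k\ge2$, the sum of these terms over zeros converges and is uniform in $\log x$. Conjugate zeros are paired to give $2\Re\sum_{\gamma>0}$.

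If the error term for large $|\gamma|$ does not decay fast enough, we would fall back on truncating the zero sum at $T=\log x$ and bounding the tail using the $|\gamma|^{-k}$ decay of $C_k(\rho)$ itself.
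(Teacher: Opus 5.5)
\textbf{The gap is your first step.} The overall architecture is the paper's: one Perron variable for $p_1\cdots p_k\le x$ and $k-1$ for the order conditions, the kernel $1/(s\,w_1\cdots w_{k-1})$, principal points $a_j=1$ ($j<k$) and $a_k\in\{\frac12,\rho\}$, $C_k(z)$ read off from the kernel at $(1,\dots,1,z)$, and $|\gamma|^{-k}$ decay from the $k$ linear denominators.

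For $k\ge3$ a trivial bound cannot remove the integers with a repeated prime. The integers $n=9q\le x$ with $q>3$ prime have $\Om(n)=3$, $\om(n)=2$ and $\Pplus(n)=q$. There are $\sim x/(9\log x)$ of them, while the main term is only $x^{5/6}/(\log x)^3$. In general $\#\{n\le x:\Om(n)=k>\om(n)\}\gg_k x/\log x$. This mass sits where a small prime is squared and the largest prime has size about $x/p^2$, far from the balanced point, so no localisation argument helps.

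What you need is RH cancellation in the largest distinct prime. Write each pattern as $q_1^{e_1}\cdots q_r^{e_r}$ with $q_1<\cdots<q_r$ and $r\le k-1$, and split into dyadic boxes. Apply $\sum_{q\in I}\chi(q)\ll Q^{1/2}\log Q$ to $q_r$, and use $\sum_{i<r}v_i+\frac12 v_r\le 1-\frac1{2r}$ under $\sum e_iv_i\le1$. This gives $x^{1-1/(2(k-1))}(\log x)^{O_k(1)}$, a power below the main term; it is how the paper bounds $R_k$.

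The same problem is hidden in your Perron integral, which you identify with $D_k^*(x)$. Since $\prod_j P(a_j)$ runs over independent prime tuples and the sharp Perron kernel equals $\frac12$ at $p_{j+1}/p_j=1$, the integral is $D_k^*(x)$ plus the tuples with some $p_j=p_{j+1}$, weighted by powers of $\frac12$. These again have trivial size $\gg x/\log x$ and need the same cancellation; this is the paper's $\mathcal E_k^\Delta$. Only for $k=2$ does the trivial bound suffice, since then the sole repeated terms are $p^2\le x$.

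\textbf{Secondary repairs, routine under RH.} Leaving the non-principal pieces on $\Re a_j=1-\delta/\log T$ with $T$ a power of $x$ gains only the constant factor $e^{-\delta\log x/(k\log T)}$, so it cannot give an error $O(x^{1-1/(2k)}(\log x)^{-k-1})$. Cut the Hankel loops at a fixed distance instead ($\Re a_j=1-\eps$, $\Re a_k=\frac12-\eps$). Then every mixed piece saves $x^{-\eps/k}$; this includes pieces where $a_k$ is on the loop of a zero and some $a_j$ is still free, summed over all $\rho$ at once by treating $\gamma$ as an extra variable. You must also order the shifts so that $\Re s$ and every $\Re w_j$ stay positive; the paper moves $a_k$ first, then $a_{k-1},\dots,a_1$.

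The $2^{k-1}$ in $B_k$ is not a scaling effect of $\log\z(2s)$. The cut of $-\frac12\log\z(2a)$ at $a=\frac12$ has half the jump of $\log(a-\frac12)$, which gives $B_k=\frac12C_k(\frac12)$; the $2^{k-1}$ is the kernel factor $(1-z)^{-(k-1)}$ at $z=\frac12$.

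Your per-zero secondary bound $O_k(|\gamma|^{-k})$ is right, since the kernel gradient at $(1,\dots,1,\rho)$ is $O_k(|\gamma|^{-k-1})$, so the fallback is unnecessary. For $k=2$ it would in any case cost $(\log\log x)^2$.

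\textbf{Relation to the paper's route.} Your Hankel loops are the paper's computation in different form. Since $\sum_p p^{-a}=\int_0^\infty\sum_p(\log p)p^{-a-u}\,du$, the paper takes residues at the simple poles of $-\z'/\z$ and $-L'/L$ for fixed $u$, then evaluates the $u$-integral by Laplace's method at $u=0$; that is exactly your cut integral. It also uses the Gaussian factor $e^{s^2/T^2}$ with $x$ a half-integer instead of sharp truncations. This lets it shift one $a$-coordinate at a time over full vertical lines, without a truncation box in $(s,w)$ that is misaligned with the $a$-coordinates.
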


The constant $B_k$ comes from the prime-square part of the Euler product and
gives a fixed negative bias after
the outside minus sign is applied.  The zeros of $L(s,\chi)$ produce the
oscillating part.  We will bound all of those oscillations at once and prove
that, even if their phases align in the worst possible way, they cannot
cancel the fixed negative term.  This gives the following sign result.

\begin{corollary}\label{cor:negative-intro}
Under the assumptions of Theorem~\ref{thm:main-intro}, for every fixed
$k\ge2$ there is $x_k$ such that
\[
 D_k(x)<0\qquad(x\ge x_k).
\]
\end{corollary}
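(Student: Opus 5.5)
From Theorem~\ref{thm:main-intro}, the corollary reduces to the inequality
\begin{equation*}
 2\sum_{\gamma>0} m_\rho\,\bigl|C_k(\rho)\bigr| < B_k .
\end{equation*}
Once this holds, the braced term in \eqref{eq:main-intro} is at least $\delta_k:=B_k-2\sum_{\gamma>0}m_\rho|C_k(\rho)|>0$ for every $x$. Then $D_k(x)\le -\delta_k x^{1-\frac1{2k}}(\log x)^{-k}+O_k\bigl(x^{1-\frac1{2k}}(\log x)^{-k-1}\bigr)$, which is negative once $\log x$ exceeds a constant depending on $k$. That gives $x_k$. So the whole proof is a numerical comparison between the zero sum and $B_k$, uniform in $k\ge2$.

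To bound $|C_k(\rho)|$, write $\rho=\tfrac12+i\gamma$. Then $|k-1+\rho|=\sqrt{(k-\tfrac12)^2+\gamma^2}$ and $|1-\rho|=\sqrt{\tfrac14+\gamma^2}$. The common factor $k^{2k-1}/(k-1)!$ cancels against $B_k$, so the target becomes
\begin{equation*}
 \sum_{\gamma>0}\frac{m_\rho}{\sqrt{(k-\tfrac12)^2+\gamma^2}\,(\tfrac14+\gamma^2)^{(k-1)/2}}
 <\frac{2^{k-2}}{2k-1}.
\end{equation*}
We use the crude bound $\sqrt{(k-\tfrac12)^2+\gamma^2}\ge\gamma$ and $(\tfrac14+\gamma^2)^{(k-1)/2}\ge\gamma^{k-1}$. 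Multiplying through, the left side is at most $(k-\tfrac12)^{-1}\sum m_\rho\gamma^{-(k-1)}$ after keeping the first factor as $k-\tfrac12$ instead, or at most $\sum m_\rho\gamma^{-k}$. Since the lowest zero of $\beta(s)$ has $\gamma_1\approx 6.0209$, each term is tiny compared with $2^{k-2}$. The sum $\sum m_\rho\gamma^{-k}$ is largest at $k=2$. For $k\ge3$ we bound it by $\gamma_1^{-(k-2)}\sum m_\rho\gamma^{-2}$, which decays geometrically while the right side grows like $2^{k}/k$. It therefore suffices to handle $\sum_{\gamma>0}m_\rho\gamma^{-2}$ and check $k=2$ (and perhaps $k=3$) by hand.

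The main obstacle is an explicit, rigorous bound for $S:=\sum_{\gamma>0}m_\rho\gamma^{-2}$ together with the $k=2$ case, where the required inequality is $\sum m_\rho\bigl((\tfrac94+\gamma^2)(\tfrac14+\gamma^2)\bigr)^{-1/2}<\tfrac13$. I would first use the identity $\sum_\rho \Re(1/\rho)=\tfrac12\log(q/\pi)$ plus Euler-constant terms, from the Hadamard product and the functional equation for $L(s,\chi_{-4})$. Under RH this gives $\sum_{\gamma>0}m_\rho\,\frac{1}{1/4+\gamma^2}=B$ with $B$ an explicit constant of size about $0.1$–$0.2$ (computable from $\gamma_{\mathrm{Euler}}$, $\log 4\pi$, and $L'(1,\chi)/L(1,\chi)$). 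Since $(\tfrac94+\gamma^2)^{1/2}(\tfrac14+\gamma^2)^{1/2}\ge\tfrac14+\gamma^2$, the $k=2$ sum is at most $B<\tfrac13$. For general $k$, the summand is at most $(\tfrac14+\gamma^2)^{-1}\cdot(\tfrac14+\gamma_1^2)^{-(k-2)/2}$, giving the bound $B\,\gamma_1^{-(k-2)}<2^{k-2}/(2k-1)$ for all $k\ge2$. If the exact constant $B$ is awkward to certify, a fallback is the explicit zero-counting estimate $N(T,\chi)\le\frac{T}{\pi}\log\frac{T}{2\pi e}+C\log T$ (Trudgian-type bounds), combined with partial summation from $\gamma_1$. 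This gives a slightly weaker but still sufficient numeric bound.
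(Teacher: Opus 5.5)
This is essentially the paper's argument: you reduce to $2\sum_{\gamma>0}m_\rho|C_k(\rho)|<B_k$, bound each coefficient by a multiple of $(\tfrac14+\gamma^2)^{-1}$, and use the exact zero mass $\mathcal S_\chi=\frac{\Lambda'}{\Lambda}(1)$, so everything comes down to $\mathcal S_\chi<\tfrac13$. What you still owe is a certified bound for that constant, and there are three points to fix.

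\begin{itemize}
\item The constant is $\tfrac12\log\tfrac4\pi-\tfrac{\gamma_0}{2}+\beta'(1)/\beta(1)\approx0.078$, not $0.1$--$0.2$. With $\log 4\pi$ in place of $\log\tfrac4\pi$ it would be about $1.2$, and the argument would fail.
\item The paper certifies $\mathcal S_\chi<\tfrac13$ from $\beta(1)=\tfrac\pi4$ and the alternating-series bound $0<\beta'(1)<\tfrac{\log3}{3}$.
\item Your appeal to the computed value $\gamma_1\approx6.02$ for $k\ge3$ is unnecessary. $\mathcal S_\chi<\tfrac13$ already forces $\tfrac14+\gamma^2>3$ for every zero. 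The paper's uniform ratio bound $|C_k(\rho)|/C_k(\tfrac12)\le\frac{3/4}{1/4+\gamma^2}$ avoids zero locations altogether.
\end{itemize}
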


The proof proceeds in three steps.  First, we replace the product and
ordering conditions by a multidimensional contour integral.  Second, we move the
contours to the left.  Whenever a contour crosses a singularity, its residue
becomes a term in the formula.  The singularities at $1$, at $1/2$, and at
the zeros of $L(s,\chi)$ give the terms of leading size; every other piece is
smaller by a fixed power of $x$.  Third, we remove the temporary logarithmic
weights and prove that the total zero contribution is smaller than the fixed
term.  All estimates are for fixed $k$.  They are not uniform when
$k$ is of order $\log\log x$, which is why the unrestricted sign problem remains
open.

\section{Prime series and the ordered Perron formula}
\label{sec:perron}

We first put the counting problem into a form suited to complex analysis by
temporarily attaching a factor $\log p$ to each prime.
This makes the prime sums become standard logarithmic derivatives of Euler
products.  Perron's formula then converts the product and ordering
inequalities into contour integrals.  The temporary logarithmic weights are
removed at the end.

\subsection{Prime logarithmic series}

For $\Re s>1$, define
\[
 F_0(s):=\sum_p\frac{\log p}{p^s},\qquad
 F_\chi(s):=\sum_p\frac{\chi(p)\log p}{p^s}.
\]

\begin{lemma}[Prime series and their poles]
\label{lem:prime-series}
For every fixed $\delta>0$, there are functions $H_0,H_\chi$, holomorphic
and bounded on every closed half-plane
$\Re s\geq1/3+\delta$, such that
\begin{align}
 F_0(s)&=-\frac{\z'}{\z}(s)+\frac{\z'}{\z}(2s)+H_0(s),
 \label{eq:F0}\\
 F_\chi(s)&=-\frac{L'}{L}(s,\chi)
 +\frac{\z'}{\z}(2s)+H_\chi(s).
 \label{eq:Fchi}
\end{align}
In this half-plane the complete pole inventory, before any accidental
cancellation at coincident points, is as follows.
\begin{enumerate}[label=\textup{(\roman*)}]
\item $F_0$ has a simple pole of residue $1$ at $s=1$.
\item If $\varrho$ is a zero of $\z(s)$ of multiplicity $m_\varrho$, then
$-\z'/\z(s)$ contributes residue $-m_\varrho$ to $F_0$ at
$s=\varrho$; and $\z'/\z(2s)$ contributes residue $m_\varrho/2$ to
both $F_0$ and $F_\chi$ at $s=\varrho/2$.
\item The pole of $\z(2s)$ at $s=1/2$ gives both $F_0$ and $F_\chi$ a
simple pole of residue $-1/2$ there.
\item If $\rho$ is a zero of $L(s,\chi)$ of multiplicity $m_\rho$, then
$-L'/L(s,\chi)$ contributes residue $-m_\rho$ to $F_\chi$ at
$s=\rho$.
\end{enumerate}
There are no other poles in $\Re s>1/3+\delta$.
\end{lemma}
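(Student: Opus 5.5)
We compare $F_0$ and $F_\chi$ with the logarithmic derivatives of the corresponding Euler products and read off the prime-square terms explicitly. For $\Re s>1$,
\[
 -\frac{\z'}{\z}(s)=\sum_{p}\sum_{m\ge1}\frac{\log p}{p^{ms}},\qquad
 -\frac{L'}{L}(s,\chi)=\sum_{p}\sum_{m\ge1}\frac{\chi(p)^m\log p}{p^{ms}}.
\]
Hence
\[
 F_0(s)=-\frac{\z'}{\z}(s)-\sum_p\frac{\log p}{p^{2s}}-R_0(s),\qquad
 F_\chi(s)=-\frac{L'}{L}(s,\chi)-\sum_p\frac{\chi(p)^2\log p}{p^{2s}}-R_\chi(s),
\]
where $R_0,R_\chi$ collect the terms with $m\ge3$. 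These remainders are dominated by $\sum_p\sum_{m\ge3}\log p\,p^{-m\sigma}$, so they are holomorphic and uniformly bounded on $\Re s\ge1/3+\delta$.

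For the square terms we use $\chi(p)^2=1$ for odd $p$. Then $\sum_p\chi(p)^2\log p\,p^{-2s}=\sum_p\log p\,p^{-2s}-\log2\cdot 2^{-2s}$, and the correction $\log 2\cdot 2^{-2s}$ is entire and bounded for $\Re s\ge1/3$. Next we apply the same expansion at $2s$:
\[
 \sum_p\frac{\log p}{p^{2s}}=-\frac{\z'}{\z}(2s)-\sum_p\sum_{m\ge2}\frac{\log p}{p^{2ms}}.
\]
The double sum converges absolutely and uniformly for $\Re s\ge1/4+\delta$, so it is bounded and holomorphic there. This establishes \eqref{eq:F0} and \eqref{eq:Fchi}. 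Here $H_0$ and $H_\chi$ are explicit absolutely convergent Dirichlet series, plus the term $\log2\cdot2^{-2s}$ in the case of $H_\chi$. They are bounded on $\Re s\ge 1/3+\delta$ because the worst term, $m=3$, needs $3\sigma>1$. These identities are proved first for $\Re s>1$ and then used to continue $F_0,F_\chi$ meromorphically to $\Re s>1/3$.

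The pole inventory then comes from standard facts. Since $\z$ has a simple pole at $1$ and $\z(s)\neq0$ for $\Re s\ge1$, $-\z'/\z(s)$ has residue $1$ at $s=1$. At a zero $\varrho$ of multiplicity $m_\varrho$, $-\z'/\z(s)$ has residue $-m_\varrho$. For $\z'/\z(2s)$, a zero $\varrho$ of $\z$ gives a pole at $s=\varrho/2$ with residue $\tfrac12 m_\varrho$, because of the chain-rule factor $1/2$. Such points satisfy $\Re(\varrho/2)\le1/2$, and they lie in $\Re s>1/3$ only when $\Re\varrho>2/3$. Under RH no such zeros exist, but the lemma lists them unconditionally. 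The pole of $\z$ at $1$ makes $\z'/\z(2s)$ have residue $\tfrac12\cdot(-1)=-1/2$ at $s=1/2$.

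$L(s,\chi)$ is entire because $\chi$ is nonprincipal, so $-L'/L$ has poles only at zeros of $L$, each with residue $-m_\rho$. Trivial zeros of $\z$ and $L$ lie in $\Re s\le0$. Trivial zeros of $\z(2s)$ lie at $\Re s<0$. None of these reach the half-plane. Since $H_0,H_\chi$ are holomorphic there, no other poles occur. The only delicate point is the boundedness of $H_0,H_\chi$ up to the line $\Re s=1/3+\delta$, which is where the restriction $1/3$ comes from. That step is a routine absolute-convergence estimate, so I expect no genuine obstacle beyond careful bookkeeping of the $p=2$ term.
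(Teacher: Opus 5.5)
Your proposal is correct and follows essentially the same route as the paper: expand the logarithmic derivatives of the Euler products, use $\chi(p)^2=1$ for odd $p$ to turn the prime-square terms into $\zeta'/\zeta(2s)$, absorb the higher prime powers and the $p=2$ correction into remainders that are bounded and holomorphic for $\Re s\ge 1/3+\delta$, and then read off the residues. Where the paper only says the square terms have ``the same singular part'', you write out the re-expansion at $2s$ and the sign of the $\log 2\cdot 2^{-2s}$ term explicitly; this is more careful bookkeeping, not a different argument.
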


\begin{proof}
The logarithmic derivatives of the Euler products give
\[
 -\frac{\z'}{\z}(s)
 =\sum_p\sum_{\nu\geq1}\frac{\log p}{p^{\nu s}},
\qquad
 -\frac{L'}{L}(s,\chi)
 =\sum_{p\nmid4}\sum_{\nu\geq1}
 \frac{\chi(p)^\nu\log p}{p^{\nu s}}.
\]
Separate the terms $\nu=1,2$.  Since $\chi(p)^2=1$ for odd $p$, the
$\nu=2$ terms in both formulas have the same singular part as
$-\z'(2s)/\z(2s)$.  All terms with $\nu\geq3$, together with the prime-$2$
correction, converge absolutely and uniformly on compact subsets of every
closed half-plane
$\Re s\geq1/3+\delta$.  This proves \eqref{eq:F0} and \eqref{eq:Fchi} and
the asserted boundedness of the two remainders.

At a zero of multiplicity $m$, a logarithmic derivative has residue $m$;
at the pole of $\z$ at $1$, it has residue $-1$.  The change of variable
$s\mapsto2s$ divides residues by $2$.  These observations give the
listed singular parts and show that the list is complete.
\end{proof}

\subsection{The ordered integral}

We first treat integers whose $k$ prime factors are distinct.  Such integers
are called squarefree in this fixed layer.  The parameters
$u_1,\ldots,u_k$ below are only bookkeeping variables: integrating them out
will exactly remove the logarithmic weights.  For
$\boldsymbol u=(u_1,\ldots,u_k)\in[0,\infty)^k$, put
\begin{equation}\label{eq:Gk}
 G_k(X;\boldsymbol u):=
 \sum_{\substack{p_1<\cdots<p_k\\p_1\cdots p_k\leq X}}
 \chi(p_k)\prod_{j=1}^k\frac{\log p_j}{p_j^{u_j}}.
\end{equation}
Termwise integration gives
\begin{equation}\label{eq:remove-weights}
 \int_{[0,\infty)^k}G_k(X;\boldsymbol u)\dd\boldsymbol u
 =
 D_k^\square(X),
\end{equation}
where
\[
 D_k^\square(X):=
 \sum_{\substack{n\leq X\\\Om(n)=\om(n)=k}}
 \chi(\Pplus(n)).
\]
Here the superscript $\square$ indicates that the sum is restricted to squarefree integers.

Let
\[
 h_T(y):=\frac{1}{2\pi i}
 \int_{c-i\infty}^{c+i\infty}
 y^s e^{s^2/T^2}\frac{\dd s}{s}\qquad(c>0).
\]
Direct differentiation, followed by evaluation at $y=1$, gives
\begin{equation}\label{eq:hT}
 h_T(y)=\frac12\operatorname{erfc}
 \left(-\frac{T\log y}{2}\right).
\end{equation}
Here $\operatorname{erfc}$ is the complementary error function.
Thus $h_T$ is a smooth version of the step function that tests whether
$y>1$.  When $T$ is large it is very close to $1$ for $y>1$ and very close
to $0$ for $y<1$, with a rapidly decaying Gaussian error away from the
boundary $y=1$.  This smoothing lets us use contour integrals without having
to carry sharp cutoffs through every intermediate step.

Introduce variables $s,w_1,\ldots,w_{k-1}$ for the product condition and
the $k-1$ order conditions.  Set
\begin{equation}\label{eq:a-coordinates}
 a_1=s+w_1,\qquad
 a_j=s-w_{j-1}+w_j\ (2\leq j\leq k-1),\qquad
 a_k=s-w_{k-1}.
\end{equation}
The inverse transformation is
\begin{equation}\label{eq:inverse-coordinates}
 s=\frac1k\sum_{j=1}^ka_j,\qquad
 w_j=\sum_{i\leq j}a_i-js\quad(1\leq j\leq k-1),
\end{equation}
and its Jacobian is
\begin{equation}\label{eq:jacobian}
 \dd s\,\dd w_1\cdots\dd w_{k-1}
 =\frac1k\,\dd a_1\cdots\dd a_k.
\end{equation}
Define
\begin{equation}\label{eq:kernel}
 K_k(\boldsymbol a):=
 \frac{1}{k\,s(\boldsymbol a)
 \prod_{j=1}^{k-1}w_j(\boldsymbol a)}
\end{equation}
and
\[
 Q_k(\boldsymbol a):=
 s(\boldsymbol a)^2+\sum_{j=1}^{k-1}w_j(\boldsymbol a)^2.
\]

Replacing each sharp inequality by this smooth test function gives a sum
that is easier to represent by contour integrals.  The resulting smoothing
error will be shown to be negligible.
Define
\begin{equation}\label{eq:regularized-G}
 \mathcal G_{k,T}(X;\boldsymbol u):=
 \sum_{p_1,\ldots,p_k\ \mathrm{prime}}
 \chi(p_k)\prod_{j=1}^k\frac{\log p_j}{p_j^{u_j}}
 h_T\left(\frac{X}{p_1\cdots p_k}\right)
 \prod_{j=1}^{k-1}h_T\left(\frac{p_{j+1}}{p_j}\right).
\end{equation}
Here the primes are summed independently; the ordering is imposed only by the
factors $h_T(p_{j+1}/p_j)$.  For finite $T$ these smooth factors do not
completely exclude equal neighboring primes.  Those equalities are what we
later call the \emph{diagonal terms}.  They will be isolated and shown to be
smaller than the main term.

\begin{proposition}[Regularized ordered Perron formula]
\label{prop:perron}
Fix $\sigma_0>\sigma_1>1$, and take the initial $a$-contours
$\Re a_1=\cdots=\Re a_{k-1}=\sigma_0$ and
$\Re a_k=\sigma_1$.  Then
\begin{equation}\label{eq:perron}
\begin{aligned}
 \mathcal G_{k,T}(X;\boldsymbol u)
 ={}&\frac{1}{(2\pi i)^k}\int\cdots\int
 F_0(a_1+u_1)\cdots F_0(a_{k-1}+u_{k-1})\\
 &\times F_\chi(a_k+u_k)X^{s(\boldsymbol a)}
 e^{Q_k(\boldsymbol a)/T^2}K_k(\boldsymbol a)\dd\boldsymbol a.
\end{aligned}
\end{equation}
\end{proposition}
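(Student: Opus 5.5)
The plan is to expand each smoothing factor in \eqref{eq:regularized-G} by its defining contour integral, multiply the $k$ integrals together, and interchange the prime sum with the $k$-fold integral. The prime sum then factors into the prime series of Lemma~\ref{lem:prime-series}, and the change of variables \eqref{eq:a-coordinates} carries the integral into the $\boldsymbol a$-coordinates.

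\emph{Choice of contours.} First I fix the abscissae of the $s,w$-lines so that they correspond to the prescribed $a$-contours. By \eqref{eq:inverse-coordinates}, the data $\Re a_1=\cdots=\Re a_{k-1}=\sigma_0$ and $\Re a_k=\sigma_1$ give
\[
 \Re s=\frac{(k-1)\sigma_0+\sigma_1}{k}>0,\qquad
 \Re w_j=\frac{j(\sigma_0-\sigma_1)}{k}>0\quad(1\le j\le k-1),
\]
using $\sigma_0>\sigma_1>1$. So each of $s,w_1,\dots,w_{k-1}$ runs over a vertical line with positive real part, and the definition of $h_T$ applies to every factor. The value of $h_T$ does not depend on $c>0$: the integrand $y^se^{s^2/T^2}/s$ is holomorphic away from $s=0$, and $|e^{s^2/T^2}|=e^{(\sigma^2-t^2)/T^2}$ decays in vertical strips, so the horizontal connecting segments vanish. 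I then write
\[
 h_T\bigl(X/(p_1\cdots p_k)\bigr)=\frac1{2\pi i}\int_{(\Re s)}\Bigl(\frac{X}{p_1\cdots p_k}\Bigr)^{s}e^{s^2/T^2}\frac{\dd s}{s},
 \qquad
 h_T\Bigl(\frac{p_{j+1}}{p_j}\Bigr)=\frac1{2\pi i}\int_{(\Re w_j)}\Bigl(\frac{p_{j+1}}{p_j}\Bigr)^{w_j}e^{w_j^2/T^2}\frac{\dd w_j}{w_j}.
\]

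\emph{Collecting exponents and factoring.} Next I multiply these $k$ integrals and read off the exponent of each prime. The prime $p_1$ carries $p_1^{-s-w_1}=p_1^{-a_1}$. For $2\le j\le k-1$, $p_j$ carries $p_j^{-s+w_{j-1}-w_j}=p_j^{-a_j}$. Finally $p_k$ carries $p_k^{-s+w_{k-1}}=p_k^{-a_k}$. Together with the weights $\log p_j\,p_j^{-u_j}$ and the character $\chi(p_k)$, the sum over independent primes factors as
\[
 F_0(a_1+u_1)\cdots F_0(a_{k-1}+u_{k-1})\,F_\chi(a_k+u_k).
\]
The remaining factors are $X^s$, the Gaussian $e^{s^2/T^2}\prod_j e^{w_j^2/T^2}=e^{Q_k(\boldsymbol a)/T^2}$, and the denominator $s\prod_j w_j$. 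After the substitution \eqref{eq:a-coordinates} with Jacobian \eqref{eq:jacobian}, the factor $1/k$ combines with this denominator to give exactly $K_k(\boldsymbol a)$ from \eqref{eq:kernel}. Because the map is real-linear with real coefficients, a product of vertical lines goes to a product of vertical lines, with the real parts computed above. This gives \eqref{eq:perron}.

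\emph{Justifying the interchange (main step).} The step needing care is the use of Fubini, i.e.\ absolute convergence of the combined sum and integral. On the contours, $\Re(a_j+u_j)\ge\sigma_1>1$ for all $j$ because $u_j\ge0$. Hence $\sum_p\log p\,|p^{-a_j-u_j}|\le F_0(\sigma_1)<\infty$, uniformly in the imaginary parts. For the integral, write $t=\Im s$ and $\tau_j=\Im w_j$. Then
\[
 |e^{Q_k/T^2}|=\exp\Bigl(\frac{(\Re s)^2+\sum_j(\Re w_j)^2-t^2-\sum_j\tau_j^2}{T^2}\Bigr),
\]
which is a Gaussian in $(t,\tau_1,\dots,\tau_{k-1})\in\mathbb R^k$. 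The denominators satisfy $|s|\ge\Re s>0$ and $|w_j|\ge\Re w_j>0$, so they are bounded below. Thus the absolute integrand is dominated by an integrable function times a convergent prime sum, and Tonelli/Fubini justifies exchanging $\sum_{p_1,\dots,p_k}$ with the $k$-fold integral. This step is routine once the positivity of $\Re s$ and $\Re w_j$ and the lower bound $\Re(a_j+u_j)>1$ are recorded. The only genuine bookkeeping is the exponent check above, which is where the specific coordinates \eqref{eq:a-coordinates} were designed to make the factorization exact.
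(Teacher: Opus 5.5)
Your proposal is correct and follows the same route as the paper: expand each $h_T$ factor by its contour integral, interchange the prime sum with the integrals by absolute convergence on the initial lines, match the prime exponents using \eqref{eq:a-coordinates}, and change variables via \eqref{eq:jacobian}. Your explicit abscissae $\Re s=((k-1)\sigma_0+\sigma_1)/k$ and $\Re w_j=j(\sigma_0-\sigma_1)/k$, together with the Gaussian domination argument, just make explicit what the paper's one-line appeal to absolute convergence leaves implicit.
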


\begin{proof}
Use one copy of the integral defining $h_T$ for the product condition and one
copy for each of the $k-1$ order conditions.  On the initial contours all
prime series converge absolutely, so we may interchange the prime sums and
the contour integrals.  The exponent of $p_1$ is $-(s+w_1)$, the exponent of
$p_j$ is $-(s-w_{j-1}+w_j)$ for $2\leq j\leq k-1$, and the exponent of
$p_k$ is $-(s-w_{k-1})$.  Hence the prime sums factor into the $F$-functions
in \eqref{eq:perron}.  The change of variables
\eqref{eq:a-coordinates}--\eqref{eq:jacobian} gives the kernel $K_k$ and the
Gaussian factor $Q_k$.
\end{proof}

\begin{lemma}[Uniform desmoothing]\label{lem:desmoothing}
Let $X=N+\tfrac12$, $N\in\N$, and put
\[
 T=\exp\{(\log X)^2\}.
\]
Define the exact diagonal contribution
\begin{equation}\label{eq:diagonal-definition}
 \mathcal E_k^\Delta(X;T):=
 \sum_{\substack{p_1,\ldots,p_k\\ \text{prime}\\
                  p_j=p_{j+1}\\ \text{for some}\ j<k}}
 \chi(p_k)
 h_T\left(\frac{X}{p_1\cdots p_k}\right)
 \prod_{j<k}h_T\left(\frac{p_{j+1}}{p_j}\right).
\end{equation}
After integration over $\boldsymbol u\in[0,\infty)^k$, the smoothed sum in
Proposition~\ref{prop:perron} returns the original squarefree count, apart
from the diagonal terms just described and an error smaller than every fixed
negative power of $X$:
\begin{equation}\label{eq:desmoothing}
 D_k^\square(X)+\mathcal E_k^\Delta(X;T)+O_{k,M}(X^{-M})
\end{equation}
for every fixed $M>0$.  The same conclusion holds after multiplication of
each prime tuple by any weight $W(\boldsymbol p)$ satisfying
$|W(\boldsymbol p)|\leq1$: the first term becomes the corresponding
weighted sharp sum, and the second becomes the exact adjacent-equality sum
in \eqref{eq:diagonal-definition} with the same weight inserted.
\end{lemma}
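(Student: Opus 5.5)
The plan is to integrate out the weights first and then compare the smoothed sum with the sharp one tuple by tuple. Termwise integration over $\boldsymbol u\in[0,\infty)^k$ is justified by positivity of $\log p_j/p_j^{u_j}$ and absolute convergence (see the next paragraph). Since $\int_0^\infty \log p\, p^{-u}\dd u=1$, the integral of $\mathcal G_{k,T}$ equals
\[
 S_T(X):=\sum_{p_1,\ldots,p_k}\chi(p_k)\,h_T\!\left(\frac{X}{p_1\cdots p_k}\right)\prod_{j<k}h_T\!\left(\frac{p_{j+1}}{p_j}\right).
\]
I then split the tuples into three classes: strictly increasing tuples $p_1<\cdots<p_k$; tuples with some adjacent equality $p_j=p_{j+1}$, which give exactly $\mathcal E_k^\Delta(X;T)$ by definition; and all remaining tuples, which have some strict descent $p_{j+1}<p_j$. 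The identity $\int G_k\,\dd\boldsymbol u = D_k^\square(X)$ from \eqref{eq:remove-weights} matches the first class against the sharp count, so what remains is to bound two differences.

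\textbf{Step 1: convergence of the tuple sum.} Since $0<h_T\le 1$, I need the tuple sum to converge, and this needs the Gaussian tail of $h_T$. By \eqref{eq:hT} and the standard bound $\operatorname{erfc}(z)\le e^{-z^2}$ for $z\ge0$, we have $h_T(y)\le \tfrac12\exp(-T^2(\log y)^2/4)$ when $y<1$. I will use this to show:
\begin{itemize}
\item when $p_1\cdots p_k>X$, in particular $p_1\cdots p_k\ge X+\tfrac12$ (because $X=N+\tfrac12$), we have $\log(p_1\cdots p_k/X)\ge \log(1+1/(2X))\gg 1/X$;
\item more generally, $h_T(X/m)\le \exp(-cT^2(\log(m/X))^2)$, which is summable over $m=p_1\cdots p_k$ with at most $\tau_k(m)\ll m^\eps$ tuples per $m$.
\end{itemize}
With $T=\exp((\log X)^2)$, the quantity $T^2/X^2$ exceeds every power of $X$. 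So all tuples with product $>X$ contribute $O_{k,M}(X^{-M})$ in total: the terms with $m\le 2X$ number $O(X^{1+\eps})$ and are each tiny, while the terms with $m>2X$ are bounded by $\sum_m m^\eps e^{-cT^2(\log(m/X))^2}$, which is also tiny. This step also justifies the interchange.

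\textbf{Step 2: tuples with product $<X$.} Here $\log(X/m)\ge \log(1+1/(2X))$ as well, so $h_T(X/m)=1-O(e^{-cT^2/X^2})$. For an ordering factor with $p_{j+1}\ne p_j$, the primes are distinct integers at most $X$, so $|\log(p_{j+1}/p_j)|\ge \log(1+1/X)\gg 1/X$. Hence $h_T(p_{j+1}/p_j)$ is $1-O(e^{-cT^2/X^2})$ for an ascent and $O(e^{-cT^2/X^2})$ for a descent. Two consequences follow, and since there are $O(X^k)$ such tuples in total, both errors are $O_{k,M}(X^{-M})$:
\begin{itemize}
\item each strictly increasing tuple contributes $\chi(p_k)+O(e^{-cT^2/X^2})$;
\item each tuple with a strict descent contributes $O(e^{-cT^2/X^2})$.
\end{itemize}

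\textbf{Step 3: diagonal tuples.} These are left untouched, since $\mathcal E_k^\Delta$ is defined with the smooth factors included. The weighted version follows verbatim: every estimate above used only $|\chi(p_k)|\le1$, so it holds with $\chi(p_k)$ replaced by $\chi(p_k)W(\boldsymbol p)$.

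The main obstacle is Step 1, the tail over tuples with product $>X$, because the number of such tuples is unbounded. The thing to check carefully is the summation of the Gaussian decay against the divisor function, in particular uniformity near $m\approx X$. The half-integer choice $X=N+\tfrac12$ is exactly what keeps $m$ at distance $\ge\tfrac12$ from $X$. I would handle this by splitting dyadically in $m/X$.
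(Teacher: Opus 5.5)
Your proposal is correct and follows essentially the paper's argument. You integrate out the $\boldsymbol u$-weights via $\int_0^\infty(\log p)p^{-u}\dd u=1$. You use $X=N+\tfrac12$ and integrality to get $\gg X^{-1}$ separation in both the product factor and the ordering factors, so the super-polynomially large $T^2/X^2$ makes every bounded-size discrepancy negligible. You control large products by Gaussian decay against a divisor-type count, and the adjacent-equality tuples are left as $\mathcal E_k^\Delta(X;T)$.

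The differences are cosmetic. You split at $P<X$ versus $P>X$ and treat ascents and descents directly, where the paper splits at $2X$ and uses the telescoping product inequality. You also leave the diagonal class untouched for all $P$, which is slightly cleaner than the paper's bookkeeping (the paper's $P>2X$ bound also absorbs diagonal tuples).
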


\begin{proof}
Termwise integration in $\boldsymbol u$ is justified by positivity after
absolute values and gives
\[
 \int_0^\infty(\log p)p^{-u}\dd u=1.
\]
Thus the integrated regularized sum is
\[
 \sum_{p_1,\ldots,p_k}\chi(p_k)
 h_T\left(\frac{X}{p_1\cdots p_k}\right)
 \prod_{j<k}h_T\left(\frac{p_{j+1}}{p_j}\right).
\]
Here $\mathbf{1}_E$ denotes the indicator of a set $E$.
For $v\neq1$, the standard complementary-error-function estimate gives
\begin{equation}\label{eq:h-error}
 \left|h_T(v)-\1_{(1,\infty)}(v)\right|
 \ll
 \frac{\exp\{-T^2(\log v)^2/4\}}
 {1+T|\log v|}.
\end{equation}

Put $P=p_1\cdots p_k$.  Suppose first that $P\leq2X$ and that no adjacent
coordinates coincide.  Since $P$ is an integer and
$X=N+\tfrac12$,
\[
 |\log(X/P)|\gg X^{-1}.
\]
Also every $p_j\leq2X$, and for distinct primes $p,q\leq2X$,
\[
 |\log(q/p)|\gg X^{-1}.
\]
The elementary telescoping inequality
\[
 \left|\prod_{j=0}^{k-1}a_j-\prod_{j=0}^{k-1}b_j\right|
 \leq\sum_{j=0}^{k-1}|a_j-b_j|,
 \qquad 0\leq a_j,b_j\leq1,
\]
combined with \eqref{eq:h-error}, shows that the total discrepancy from
such tuples is
\[
 \ll_k X^k\exp\{-c_kT^2/X^2\}=O_{k,M}(X^{-M}).
\]

If $P>2X$, the sharp product indicator vanishes and the absolute value of
the regularized weight is at most $h_T(X/P)$.  Grouping according to
$2^rX<P\leq2^{r+1}X$ and majorizing the number of ordered prime
factorizations of an integer $m$ by $d_k(m)\leq m^k$, we obtain
\[
 \sum_{P>2X}h_T(X/P)
 \ll_k
 \sum_{r\geq1}(2^{r+1}X)^{k+1}
 \exp\{-cT^2r^2\}
 =O_{k,M}(X^{-M}).
\]
The remaining tuples are precisely those with an adjacent equality, and
their contribution is exactly the sum in
\eqref{eq:diagonal-definition}.  Inserting a weight of modulus at most one
does not alter any of the absolute estimates above, which proves the
weighted assertion.
\end{proof}

\begin{lemma}[Value of the kernel]\label{lem:kernel-value}
For $z\neq1,1-k$,
\begin{equation}\label{eq:kernel-value}
 K_k(1,\ldots,1,z)=
 \frac{k^{k-1}}
 {(k-1)!(k-1+z)(1-z)^{k-1}}.
\end{equation}
\end{lemma}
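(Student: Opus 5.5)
Substituting $a_1=\cdots=a_{k-1}=1$, $a_k=z$ into the inverse coordinates \eqref{eq:inverse-coordinates} and evaluating the kernel \eqref{eq:kernel} gives the lemma directly; no analysis is involved. First compute
\[
 s(1,\ldots,1,z)=\frac{(k-1)+z}{k}.
\]
Next, for $1\le j\le k-1$, the partial sum $\sum_{i\le j}a_i$ equals $j$, since only the first $k-1$ coordinates enter. Hence
\[
 w_j=j-j\,\frac{k-1+z}{k}=\frac{j(1-z)}{k}.
\]
This is a sanity check on the sign conventions of \eqref{eq:a-coordinates}: for instance $a_k=s-w_{k-1}$ should return $z$, and indeed $\frac{k-1+z}{k}-\frac{(k-1)(1-z)}{k}=\frac{kz}{k}=z$.

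Then multiply:
\[
 \prod_{j=1}^{k-1}w_j=\frac{(k-1)!\,(1-z)^{k-1}}{k^{k-1}},
\]
so
\[
 k\,s\prod_j w_j=(k-1+z)\,\frac{(k-1)!\,(1-z)^{k-1}}{k^{k-1}}.
\]
Inverting this gives \eqref{eq:kernel-value}. The hypothesis $z\neq1,1-k$ is exactly the condition that neither $s$ nor any $w_j$ vanishes, so the kernel is finite there.

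The only real obstacle is bookkeeping. One must check that the factor $1/k$ in $K_k$ (coming from the Jacobian \eqref{eq:jacobian}) combines with the $k^{-1}$ in $s$ and the $k^{-(k-1)}$ in the $w_j$ to give net $k^{k-1}$. Indeed $k^{-1}\cdot k\cdot k^{k-1}=k^{k-1}$. This is also consistent with $C_k(z)=k^{k}K_k(1,\ldots,1,z)$ in \eqref{eq:B-C-def}, where the extra $k^k$ will arise later from rescaling $x^{s}$ residues.
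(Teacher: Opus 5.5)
Your proposal is correct and is essentially the paper's proof: compute $s=(k-1+z)/k$ and $w_j=j(1-z)/k$ from \eqref{eq:inverse-coordinates}, then substitute into \eqref{eq:kernel}. Your power-of-$k$ bookkeeping and your identification of $z=1,1-k$ as the points where $s$ or some $w_j$ vanishes are both right, and your side remark is consistent with the paper, where the extra $k^k$ in $C_k$ is the factor $(k/L)^k$ produced by the endpoint evaluation in Lemma~\ref{lem:endpoint}.
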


\begin{proof}
At $\boldsymbol a=(1,\ldots,1,z)$,
\[
 s=\frac{k-1+z}{k},\qquad
 w_j=\frac{j(1-z)}{k}\quad(1\leq j\leq k-1).
\]
Substitution into \eqref{eq:kernel} gives \eqref{eq:kernel-value}.
\end{proof}

\section{Moving the contours and bounding the errors}\label{sec:contours}

We extract the main terms from the contour integral by Cauchy's residue
theorem.  When a contour crosses a pole, that pole contributes its residue to
the final arithmetic formula.  Because there are $k$ contour
variables, some coordinates may already have been replaced by residues while
others are still being integrated.  We call each such mixed configuration a
``residue stratum''.

In this section $k\geq2$ is fixed, and RH is assumed for both $\z$ and
$L(s,\chi)=\beta(s)$.  The two assumptions play different roles: RH for
$L(s,\chi)$ supplies cancellation in the signed prime coordinate, while RH
for $\zeta(s)$ lets the ordinary prime contours move a fixed distance to the
left of $1$ without crossing zeta zeros.  Set
\begin{equation}\label{eq:eps-eta}
 \eps_k:=\frac{1}{16k},\qquad
 \eta_k:=\frac{1}{64k^2}.
\end{equation}
The leading contribution comes from small values of the auxiliary variables
$u_j$.  We therefore first integrate \eqref{eq:perron} over the small cube
$0\leq u_j\leq\eta_k$.  The remaining region, where at least one $u_j$ is
larger, will be shown later to have a fixed power saving.

The last prime $p_k$ is special because it carries the character $\chi$.
We therefore call its contour the \emph{signed contour}.  For fixed
$\boldsymbol u$ in the cube, move this contour first to
\[
 \Re(a_k+u_k)=\frac12-\eps_k.
\]
By Lemma~\ref{lem:prime-series}, this crosses the pole at $1/2$, of
residue $-1/2$, and every nontrivial zero
$\rho=1/2+i\gamma$ of $L(s,\chi)$, of residue $-m_\rho$.  It crosses no
half-zero pole from $\z(2s)$, since RH puts those poles on
$\Re s=1/4<1/2-\eps_k$.  Next move the
ordinary contours, in the reverse order
$a_{k-1},a_{k-2},\ldots,a_1$, to
\[
 \Re(a_j+u_j)=1-\eps_k.
\]
Again by Lemma~\ref{lem:prime-series}, only the pole at $1$ is crossed:
all other poles of $F_0$ lie on or to the left of $\Re s=1/2$, whereas
$1-\eps_k>1/2$.
There is no central zero of $L(s,\chi)$: the alternating-series estimate
\[
 L(1/2,\chi)=\beta(1/2)>
 1-\frac{1}{\sqrt3}>0
\]
shows that no zero residue occurs at $\gamma=0$.

\begin{lemma}[No kernel pole is crossed]\label{lem:geometry}
During the preceding contour displacement,
\[
 \Re s(\boldsymbol a)>0,\qquad
 \Re w_j(\boldsymbol a)>0\quad(1\leq j\leq k-1).
\]
On every final contour and every residue stratum,
\[
 \Re w_j(\boldsymbol a)\geq\frac{j}{4k}.
\]
\end{lemma}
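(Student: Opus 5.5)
The plan is to work only with real parts. Write $\sigma_j:=\Re a_j$, and use the inverse formulas \eqref{eq:inverse-coordinates}. Then $\Re s=\frac1k\sum_{j}\sigma_j$, and, after a short rearrangement, the real part of each $w_j$ becomes a sum of pairwise differences:
\[
 \Re w_j(\boldsymbol a)=\sum_{i\le j}\sigma_i-\frac jk\sum_{l=1}^k\sigma_l
 =\frac1k\Bigl[(k-j)\sum_{i\le j}\sigma_i-j\sum_{l>j}\sigma_l\Bigr]
 =\frac1k\sum_{i\le j}\ \sum_{j<l\le k}(\sigma_i-\sigma_l).
\]
Every assertion of Lemma~\ref{lem:geometry} will be read off from this identity. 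The only input is a record of the ranges that $\sigma_1,\dots,\sigma_k$ can occupy at each moment of the displacement, including on each residue stratum, where a coordinate has been frozen at a pole.

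\emph{Phase 1: the signed contour.} Here $\sigma_1=\cdots=\sigma_{k-1}=\sigma_0$. The remaining coordinate $\sigma_k$ decreases from $\sigma_1<\sigma_0$ to $\frac12-\eps_k-u_k$. On a residue stratum it is frozen at $\Re\frac12=\Re\rho=\frac12$, using RH for $L(s,\chi)$. In every case $\sigma_k<\sigma_0$, so each difference $\sigma_i-\sigma_l$ is either $0$ or $\sigma_0-\sigma_k>0$. Hence $\Re w_j=j(\sigma_0-\sigma_k)/k>0$. Also $\Re s=((k-1)\sigma_0+\sigma_k)/k>0$, since $\sigma_k\ge\frac12-\eps_k-\eta_k>0$.

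\emph{Phase 2: the ordinary contours, in the order $a_{k-1},\dots,a_1$.} Put $\alpha:=1-\eps_k-\eta_k$. Because the coordinates are moved in reverse order, at any moment there is an index $m$ with the following structure:
\begin{itemize}
\item $\sigma_i=\sigma_0$ for $i<m$;
\item $\sigma_m$ is moving from $\sigma_0$ down to $1-\eps_k-u_m\ge\alpha$;
\item $\sigma_i\in[\alpha,1]$ for $m<i\le k-1$, where the value $1$ occurs on a residue stratum from the pole at $1$;
\item $\sigma_k\le\frac12$ and $\sigma_k\ge\frac12-\eps_k-\eta_k$.
\end{itemize}
Now bound each difference $\sigma_i-\sigma_l$ with $i\le j<l$.
\begin{itemize}
\item If $l=k$, then $\sigma_i-\sigma_k\ge\alpha-\frac12=\frac12-\eps_k-\eta_k$.
\item If $l<k$ and $\sigma_i=\sigma_0$, the difference is $\ge0$, since every $\sigma_l\le\sigma_0$.
\item If $l<k$ and $\sigma_i$ has already moved (or is moving), then $l>i$ has also moved, because the moved indices form a suffix. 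So both values lie in $[\alpha,\sigma_0]$ with $\sigma_l\le1$, and the difference is $\ge\alpha-1=-(\eps_k+\eta_k)$.
\end{itemize}
There are $j$ pairs with $l=k$ and at most $j(k-1-j)$ pairs with $l<k$. Therefore
\[
 \Re w_j\ge\frac jk\Bigl[\tfrac12-\eps_k-\eta_k-(k-1-j)(\eps_k+\eta_k)\Bigr]
 \ge\frac jk\Bigl[\tfrac12-k(\eps_k+\eta_k)\Bigr]
 \ge\frac jk\Bigl(\tfrac12-\tfrac1{16}-\tfrac1{64k}\Bigr)>\frac j{4k},
\]
by \eqref{eq:eps-eta}. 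The same bound gives positivity throughout Phase 2. In particular it holds on all final contours, where every coordinate sits at its target, and on every residue stratum. Finally, $\Re s\ge\bigl((k-1)\alpha+\frac12-\eps_k-\eta_k\bigr)/k>0$.

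\emph{Main obstacle.} The only delicate point is the sign of the cross terms $\sigma_i-\sigma_l$ with $l<k$. These would be badly negative if an early coordinate had been lowered while a later one still sat at $\sigma_0$. The reverse order of displacement rules this out: the moved indices always form a suffix, so the only negative cross terms are of size $\eps_k+\eta_k$. The choice $\eps_k,\eta_k\ll1/k$ then makes the total of these losses, $k(\eps_k+\eta_k)$, small enough to leave the margin $\frac14$.
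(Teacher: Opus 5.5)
Your proof is correct and is essentially the paper's argument. Your pairwise-difference identity $k\Re w_j=\sum_{i\le j<l}(\sigma_i-\sigma_l)$ is a compact form of the paper's monotonicity step (prefix coordinates as small as allowed, ordinary suffix coordinates at most $1$, signed coordinate at most $\tfrac12$), and it leads to the same bound $k\Re w_j\ge j\{\tfrac12-(k-j)(\eps_k+\eta_k)\}$. It also treats the paper's two cases $\ell<j$ and $\ell\ge j$ at once and gives the quantitative bound throughout the displacement, not just positivity.

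There are two cosmetic slips. In Phase 1 your ``$\sigma_1$'' (the initial abscissa of $a_k$) clashes with your own $\sigma_1=\Re a_1$. Also, a frozen signed coordinate has real part $\tfrac12-u_k$ rather than $\tfrac12$, as you correctly record in Phase 2.
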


\begin{proof}
The signed contour is moved first.  While all ordinary coordinates remain
on $\Re a_i=\sigma_0$, lowering $\Re a_k$ can only increase every
$\Re w_j$.

Now proceed by reverse induction through the ordinary coordinates.  At the
stage when $a_j$ is moved, the later ordinary coordinates have already
either been placed on their final lines or replaced by residues; the earlier
coordinates remain on their initial lines.  If $\ell<j$, all coordinates
in the prefix defining $w_\ell$ are still on the line $\sigma_0$, while
every suffix coordinate has real part at most $\sigma_0$ and at least one
has smaller real part.  Hence $\Re w_\ell>0$; lowering $a_j$ only increases
it.  If $\ell\geq j$, increasing a prefix coordinate increases $w_\ell$,
so its smallest possible real part is bounded by the configuration in which
all prefix coordinates are as small as allowed, all ordinary suffix
coordinates are as large as allowed, and the signed coordinate has real
part $1/2$.  With
$\delta_k=\eps_k+\eta_k$, \eqref{eq:inverse-coordinates} gives
\[
 k\Re w_\ell
 \geq \ell\{(k-\ell)(1-\delta_k)
 -(k-1-\ell)-1/2\}
 =\ell\{1/2-(k-\ell)\delta_k\}.
\]
Since $k\delta_k<1/8$, this is at least $\ell/4$.  The same argument, or
the positivity of the average of the coordinates, gives $\Re s>0$.
On a final contour or residue stratum every ordinary prefix coordinate is
at least $1-\delta_k$, every ordinary suffix coordinate is at most $1$,
and the signed coordinate is at most $1/2$; the displayed calculation
therefore also proves the asserted uniform lower bound.
\end{proof}

\begin{lemma}[Decay of the principal zero coefficient]\label{lem:principal-zero-decay}
Let $\rho=1/2+i\gamma$ be a zero of $L(s,\chi)$ and put
\[
 \boldsymbol a_\rho(\boldsymbol u)
 =(1-u_1,\ldots,1-u_{k-1},\rho-u_k),
 \qquad 0\le u_j\le\eta_k.
\]
Then, uniformly on this cube,
\begin{equation}\label{eq:principal-zero-decay}
 |K_k(\boldsymbol a_\rho(\boldsymbol u))|
 +\|\nabla_{\boldsymbol u}K_k(\boldsymbol a_\rho(\boldsymbol u))\|
 \ll_k (1+|\gamma|)^{-k}.
\end{equation}
Consequently
\[
 \sum_\rho m_\rho
 \sup_{\boldsymbol u\in[0,\eta_k]^k}
 |K_k(\boldsymbol a_\rho(\boldsymbol u))|<\infty.
\]
\end{lemma}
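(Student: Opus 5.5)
Substitute $\boldsymbol a=\boldsymbol a_\rho(\boldsymbol u)$ into \eqref{eq:inverse-coordinates} and write $z=\rho-u_k=\frac12-u_k+i\gamma$. Set $U_j=\sum_{i\le j}u_i$ for $j\le k-1$ and $U=\sum_{i\le k}u_i$. Then
\[
 s=\frac{k-1+z}{k}-\frac{U-u_k}{k}=\frac{k-\tfrac12+i\gamma-U}{k},\qquad
 w_j=j-U_j-js=\frac{j(\tfrac12+U-i\gamma)}{k}-U_j .
\]
Each coordinate is therefore an affine function of $\boldsymbol u$ whose imaginary part is exactly $\pm\frac{j\gamma}{k}$ (respectively $\gamma/k$ for $s$), independent of $\boldsymbol u$. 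The plan is to bound each of the $k$ linear factors in the denominator of $K_k$ from below by $\gg_k 1+|\gamma|$, uniformly on the cube.

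\emph{Step 1 (lower bounds).} For the imaginary parts we have $|\Im s|=|\gamma|/k$ and $|\Im w_j|=j|\gamma|/k\ge|\gamma|/k$. For the real parts, $\Re s\ge(k-\frac12-k\eta_k)/k\ge\frac14$. For $\Re w_j$ we can either invoke Lemma~\ref{lem:geometry}, since $\boldsymbol a_\rho(\boldsymbol u)$ is a point of a residue stratum, which gives $\Re w_j\ge j/(4k)$, or compute directly: $\Re w_j=j(\frac12+U)/k-U_j\ge j/(2k)-k\eta_k\ge j/(4k)$. Combining the two parts,
\[
 |s|,\ |w_j|\ \gg_k\ \max\Bigl(\tfrac1{4k},\tfrac{|\gamma|}{k}\Bigr)\gg_k 1+|\gamma| .
\]
Taking the product gives $|K_k|\ll_k(1+|\gamma|)^{-k}$.

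\emph{Step 2 (gradient).} Each $\partial_{u_i}$ falls on a single factor. Since $s$ and the $w_j$ are affine in $\boldsymbol u$ with coefficients bounded by $O_k(1)$,
\[
 \partial_{u_i}K_k=-K_k\Bigl(\frac{\partial_{u_i}s}{s}+\sum_j\frac{\partial_{u_i}w_j}{w_j}\Bigr).
\]
Each quotient is $O_k(1)$ by Step 1, so $\|\nabla_{\boldsymbol u}K_k\|\ll_k|K_k|\ll_k(1+|\gamma|)^{-k}$. Indeed the bound is even $(1+|\gamma|)^{-k-1}$, but we do not need this.

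\emph{Step 3 (summation).} For $k\ge2$, the sum $\sum_\rho m_\rho(1+|\gamma|)^{-k}$ converges, since the counting function satisfies $N(T)\ll T\log T$ for $L(s,\chi)$. Partial summation gives the bound $\int_1^\infty T^{-k}\,dN(T)\ll\int_1^\infty T^{-k}\log T\,dT<\infty$. The zero $\gamma=0$ is absent because $\beta(1/2)>0$, as noted before Lemma~\ref{lem:geometry}, but the bound would hold regardless.

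The main point requiring care is Step~1: it needs uniformity in $\boldsymbol u$ and a real-part lower bound for small $|\gamma|$. The smallness of $\eta_k$ in \eqref{eq:eps-eta} guarantees both, so no genuine obstacle is expected.
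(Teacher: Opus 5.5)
Your proof is correct and follows the paper's argument. You read off $\Im s=\gamma/k$ and $\Im w_j=-j\gamma/k$ from the inverse coordinates, bound the real parts below uniformly on the cube, and conclude that each of the $k$ denominator factors of $K_k$ is $\gg_k 1+|\gamma|$; you then handle the gradient through the linear factors and sum using the zero count. Your direct computation of $\Re s\ge 1/4$ and $\Re w_j\ge j/(4k)$ is a slightly more self-contained version of what the paper quotes from Lemma~\ref{lem:geometry}, which as stated only gives $\Re s>0$ rather than a uniform lower bound for $\Re s$.
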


\begin{proof}
For the displayed vector $\boldsymbol a_\rho(\boldsymbol u)$, the only
imaginary part is $\gamma$ in the last coordinate.  From
\eqref{eq:inverse-coordinates},
\[
 \Im s=\frac{\gamma}{k},\qquad
 \Im w_j=-\frac{j\gamma}{k}\quad(1\le j<k).
\]
Lemma~\ref{lem:geometry} gives positive lower bounds for the real parts of
$s,w_1,\ldots,w_{k-1}$.  Hence every one of the $k$ factors in the
denominator of $K_k$ has size $\gg_k1+|\gamma|$, and therefore
$|K_k|\ll_k(1+|\gamma|)^{-k}$.  Differentiating with respect to a $u_j$
only differentiates linear denominator factors, so the same bound, in fact a
slightly stronger one for large $|\gamma|$, holds for the gradient.  Finally,
$N_\chi(T+1)-N_\chi(T)\ll\log(2+T)$, and
\[
 \sum_{n\ge0}\frac{\log(2+n)}{(1+n)^k}<\infty
 \qquad(k\ge2).
\]
This proves the last assertion.
\end{proof}

The preceding lemma concerns the principal zero stratum, where all ordinary
coordinates have already been replaced by their residues.  A partial zero
stratum may still contain free contour variables, so coefficientwise
$|\gamma|^{-k}$ decay is neither needed nor used there.  The next proposition
handles all such mixed strata at once by integrating the free variables and
summing the zero ordinates under the Gaussian regularization.

\begin{proposition}[Uniform contour and zero-stratum bound]\label{prop:stratum-majorant}
Fix $\boldsymbol u\in[0,\eta_k]^k$ and consider any final-contour or residue
term produced by the shifts below.  Let $\beta$ be the real part of
$s(\boldsymbol a)$ on that term.  The factor $X^\beta$ gives the natural size
of the stratum before logarithmic factors.  Its total absolute contribution is
\begin{equation}\label{eq:uniform-stratum-integral}
 \ll_k X^\beta(\log T)^{M_k}
\end{equation}
for some $M_k>0$, uniformly in $\boldsymbol u$.  If the signed coordinate is
a zero residue, the bound includes the sum over all zeros, with multiplicity.
\end{proposition}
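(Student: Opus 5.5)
The plan is to bound every stratum by one majorant, built from three ingredients: pointwise bounds for the prime series on the final lines, the Gaussian factor $e^{Q_k(\boldsymbol a)/T^2}$, and the kernel lower bounds of Lemma~\ref{lem:geometry}. A stratum is specified by a subset of ordinary coordinates replaced by the residue at $1$, with the rest on $\Re(a_j+u_j)=1-\eps_k$. The signed coordinate is either free on $\Re(a_k+u_k)=\tfrac12-\eps_k$, replaced by the residue at $1/2$, or replaced by a zero $\rho$. On each such configuration the real parts of all coordinates are fixed, hence so is $\beta=\Re s(\boldsymbol a)$, and $|X^{s(\boldsymbol a)}|=X^\beta$ factors out. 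It remains to show that the rest of the integrand, integrated over the free variables and summed over the zeros, is $\ll_k(\log T)^{M_k}$.

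First I bound the integrand.
\begin{itemize}
\item On $\Re a=1-\eps_k$, RH for $\z$ gives the classical estimate $\z'/\z(\sigma+it)\ll\log^2(2+|t|)$ uniformly for $\sigma\ge\tfrac12+\eps_k/2$. The same holds for $\z'/\z(2s)$, and $H_0$ is bounded, so $|F_0|\ll_k\log^2(2+|t|)$.
\item On $\Re(a_k+u_k)=\tfrac12-\eps_k$, the functional equation for $L(s,\chi)$ relates $L'/L$ to its value at the reflected point, which lies to the right of the critical line. Combined with RH, this gives $L'/L\ll\log^2(2+|t|)$ there as well; the $\z'/\z(2s)$ term sits at $\Re=\tfrac14-\eps_k/2$, which is handled the same way through the functional equation of $\z$. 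If necessary I will perturb the final line slightly to avoid zeros, choosing the perturbation uniformly in $\boldsymbol u$.
\item Lemma~\ref{lem:geometry} gives $\Re s\gg_k1$ and $\Re w_j\ge j/(4k)$, so $|K_k|\ll_k1$ on every stratum. Residues at $1$ and $1/2$ only substitute fixed real parts and preserve these bounds.
\end{itemize}

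Next I use the Gaussian. Write $\boldsymbol a=\boldsymbol\sigma+i\boldsymbol t$. Then
\[
\Re Q_k(\boldsymbol a)=\sum(\Re\text{-parts})^2-\Bigl((\Im s)^2+\sum_j(\Im w_j)^2\Bigr),
\]
and $\bigl((\Im s)^2+\sum_j(\Im w_j)^2\bigr)^{1/2}$ is a positive definite quadratic form in $\boldsymbol t$ because the linear map \eqref{eq:inverse-coordinates} is invertible. So $|e^{Q_k/T^2}|\ll_k\exp(-c_k|\boldsymbol t|^2/T^2)$. Integrating $\prod\log^2(2+|t_j|)$ against this Gaussian over the free variables gives $\ll_k T^{f}(\log T)^{2k}$, where $f$ is the number of free variables. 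This would be a power of $T$, not of $\log T$, so this step needs the kernel decay rather than just $|K_k|\ll1$. When $\ell$ of the $t$'s are large, the $w_j$ and $s$ denominators involve independent linear combinations of $\boldsymbol t$. Integrating $\prod 1/(1+|L_i(\boldsymbol t)|)$ over the free coordinates, with the $k$ independent linear forms $s,w_1,\ldots,w_{k-1}$ of which at least $f$ restrict nondegenerately, gives $\ll(\log T)^{f}$. Only the Gaussian cutoff at $|\boldsymbol t|\ll T\log T$ is used to truncate; beyond that range the Gaussian gives superpolynomial decay.

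For zero strata I sum over $\rho$ before integrating. The zero term carries $m_\rho e^{Q_k/T^2}K_k$, where $K_k$ contains the denominators with the $\gamma/k$-shifts. Using $N_\chi(t+1)-N_\chi(t)\ll\log(2+t)$, the sum over $\gamma$ behaves like an extra integration variable with a $\log$ weight, and the same linear-forms argument gives a total of $(\log T)^{f+2}$. The main obstacle is this step: showing that the product of the $k$ reciprocal linear forms, restricted to any stratum's free variables together with $\gamma$, has a full-rank subsystem so that the integral is only polylogarithmic. I would do this by a change of variables to $(\Im s,\Im w_1,\ldots)$ restricted to the relevant subspace, bounding the complementary forms trivially by $1$ via Lemma~\ref{lem:geometry}. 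The bounds are uniform in $\boldsymbol u$ because the lines are placed relative to $u_j$ and every constant above depends only on $k$.
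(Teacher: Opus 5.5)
Your proposal is correct and is essentially the paper's argument. It uses $\log^2$ bounds for the prime series on the final lines and Gaussian decay from the imaginary part of $Q_k$. Via invertibility of $\boldsymbol a\mapsto(s,w_1,\ldots,w_{k-1})$, it chooses kernel denominators independent on the free subspace (augmented by $\gamma$ on zero strata), bounds the remaining denominators using Lemma~\ref{lem:geometry}, and treats the zero ordinate as one more continuous variable weighted by the local zero count.

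One harmless slip: on the signed line $\Re(2s)=1-2\eps_k$, not $\tfrac14-\eps_k/2$, so $\z'/\z(2s)\ll\log^2(2+|t|)$ there follows directly from RH for $\z$, with no functional equation needed. Your uniform treatment also covers the principal zero stratum, which the paper handles separately through Lemma~\ref{lem:principal-zero-decay}.
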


\begin{proof}
Let $d_c$ be the number of coordinates that still run on vertical contours.
If the signed coordinate is not a zero residue, put $d=d_c$.  If it is a zero
residue and $d_c\ge1$, put $d=d_c+1$ and count the zero ordinate $\gamma$ as
one additional vertical variable.  The principal zero stratum $d_c=0$ is
handled separately by Lemma~\ref{lem:principal-zero-decay}.

We need $d$ genuinely independent denominator factors.  The map
\[
 (a_1,\ldots,a_k)\longmapsto(s,w_1,\ldots,w_{k-1})
\]
is invertible by \eqref{eq:a-coordinates}--\eqref{eq:jacobian}.  Therefore
the columns belonging to the free contour coordinates, together with the
signed column when $\gamma$ is counted, are linearly independent.  Among the
$k$ kernel denominators we can consequently choose $d$ whose imaginary parts
are independent real linear forms $\ell_1,\ldots,\ell_d$ in these vertical
variables.  Lemma~\ref{lem:geometry} gives a positive lower bound, depending
only on $k$, for their real parts and for every unused kernel denominator.

If $d_c=0$ and the signed coordinate is a zero residue, the proposition
follows at once from Lemma~\ref{lem:principal-zero-decay} and the Gaussian.
We may therefore assume that either there is no zero residue or $d_c\ge1$.

On the final lines, standard local partial-fraction formulas and the zero
count $N_F(t+1)-N_F(t)\ll\log(2+|t|)$ give
\[
 F_0(1-\eps_k+it),\quad F_\chi(1/2-\eps_k+it)
 \ll_k\log^2(2+|t|).
\]
The same bound holds for the $\z'(2s)/\z(2s)$ terms.  Also, since the above
linear map is invertible, the Gaussian satisfies
\[
 \left|e^{Q_k(\boldsymbol a)/T^2}\right|
 \ll_k e^{-c_k\|\boldsymbol y\|^2/T^2}
\]
on every stratum, where $\boldsymbol y$ collects the continuous vertical
variables and, for the purpose of the zero-stratum majorant, also the real
variable that replaces $\gamma$.  Thus the absolute integrand is bounded by
\[
 X^\beta\frac{\log^{2k}(2+\|\boldsymbol y\|)
 e^{-c_k\|\boldsymbol y\|^2/T^2}}
 {\prod_{r=1}^d|c_r+i\ell_r(\boldsymbol y)|},
 \qquad c_r\ge c_k>0.
\]
After an invertible real change of variables, the integral is bounded by a
product of one-dimensional integrals of the form
\[
 \int_{\mathbb R}\frac{e^{-ct^2/T^2}}{(c_0^2+t^2)^{1/2}}\,dt
 \ll_c\log T,
\]
with only a fixed extra power of $\log T$ from the numerator.  This proves
\eqref{eq:uniform-stratum-integral} when all $d$ vertical variables are
continuous.

If the signed coordinate has been replaced by a zero
$\rho=1/2+i\gamma$ and some ordinary coordinates are still free, we do not
estimate each zero separately.  Instead the ordinate $\gamma$ is treated as
one more vertical variable.  This gives a direct bound for the sum of the
whole zero family on that stratum.  The principal zero stratum, where no
ordinary variable remains, has the sharper coefficientwise decay proved in
Lemma~\ref{lem:principal-zero-decay}.  Let $G(\boldsymbol
t,\gamma)$ denote the preceding majorant with the zero ordinate in place of
one free variable.  Choose a fixed $\delta>0$ so small that, for
$|v-\gamma|\leq\delta$, all selected denominators and the Gaussian are
comparable after decreasing the Gaussian constant if necessary.  Then
\[
 G(\boldsymbol t,\gamma)
 \ll_k \int_{\gamma-\delta}^{\gamma+\delta}G^\ast(\boldsymbol t,v)\,dv,
\]
where $G^\ast$ has the same shape as $G$.  In each interval $[n,n+1)$ there
are $O(\log(2+|n|))$ zeros, counted with multiplicity.  For the zeros with
$\gamma\in[n,n+1)$, the displayed bound is at most a constant times
$\log(2+|n|)$ times the integral over the enlarged interval
$[n-\delta,n+1+\delta]$.  These intervals, now indexed by $n$, have bounded
overlap.  Summing over $n$ therefore gives the same $d$-dimensional
continuous integral, with one additional logarithmic factor.  Hence the zero strata are also
$O_k(X^\beta(\log T)^{M_k})$, absolutely and uniformly in
$\boldsymbol u$.  This is the absolute convergence used later.
\end{proof}

\begin{lemma}[Safe contour heights]\label{lem:admissible-heights}
There is a sequence $\tau_\nu\to\infty$ such that the horizontal sides of
all contour rectangles avoid the zeros of $\zeta(s)$, $L(s,\chi)$, and the
scaled zeros occurring in $\zeta(2s)$.  The same sequence works for every
coordinate and every $\boldsymbol u\in[0,\eta_k]^k$, and uniformly in the
fixed real strips used below,
\begin{equation}\label{eq:log-derivative-horizontal}
 \frac{\z'}{\z}(\sigma+i\tau_\nu),\quad
 \frac{L'}{L}(\sigma+i\tau_\nu,\chi),\quad
 \frac{\z'}{\z}(2\sigma+2i\tau_\nu)
 \ll_k\log^2(2+\tau_\nu).
\end{equation}
\end{lemma}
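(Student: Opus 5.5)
The plan is the classical choice of heights avoiding zeros, done for the three zero sets at once, followed by the standard partial-fraction bound for logarithmic derivatives. First we fix what "scaled zeros" means. A horizontal segment at height $\tau$ in the $a_j$-variable sits at height $\tau$ for the argument $a_j+u_j$, because $u_j$ is real. So only the ordinates matter, and the choice can be made uniformly in $\boldsymbol u$ and in the coordinate $j$. The relevant ordinates are:
\begin{itemize}
\item the ordinates $t$ of zeros of $\z(s)$ and of $L(s,\chi)$;
\item the ordinates $t/2$ coming from $\z(2s)$, whose zeros sit at $s=\varrho/2$.
\end{itemize}
By the Riemann--von Mangoldt formulas for $\z$ and for $L(s,\chi)$, each interval $[n,n+1]$ contains $O(\log(2+n))$ ordinates of each kind. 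For the halved ordinates, the interval $[n,n+1]$ corresponds to zeta ordinates in $[2n,2n+2]$, which again gives $O(\log(2+n))$.

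Next, pigeonhole. The union of the three ordinate sets has $O(\log(2+n))$ points in $[n,n+1]$. These points split the interval into subintervals, so some gap has length $\gg 1/\log(2+n)$. We let $\tau_n$ be its midpoint. Then
\[
 |\tau_n-\gamma|\gg\frac1{\log(2+n)}
\]
for every ordinate $\gamma$ of each type. For the $\z(2s)$ term this means $|2\tau_n-\gamma'|\gg 1/\log(2+n)$ for every zeta ordinate $\gamma'$. We put $\tau_\nu:=\tau_\nu$ for $\nu\in\N$.

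For the bound \eqref{eq:log-derivative-horizontal}, we use the standard local formula, valid uniformly for $-1\le\sigma\le2$ and $|t|\ge2$:
\[
 \frac{\z'}{\z}(\sigma+it)=\sum_{|\gamma'-t|\le1}\frac{1}{\sigma+it-\varrho}+O(\log|t|),
\]
and the analogous formula for $L'/L(s,\chi)$, with the conductor absorbed into the constant. In the strips actually used, $\Re s$ runs over $[1/2-\eps_k-\eta_k,\sigma_0+\eta_k]$, and for the $\z(2s)$ term the argument is $2\sigma$ with $2\sigma$ in a bounded strip. Where $2\sigma$ or $\sigma$ exceeds $2$, absolute convergence already gives an $O(1)$ bound. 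At $t=\tau_\nu$, each of the $O(\log\tau_\nu)$ terms in the sum is at most $\ll\log\tau_\nu$, because the vertical gap is at least that large. This gives $\ll\log^2(2+\tau_\nu)$. The same argument at $t=2\tau_\nu$ handles $\z'/\z(2\sigma+2i\tau_\nu)$. Negative heights $-\tau_\nu$ follow by conjugation symmetry: the zeros of $\z$ are symmetric, and those of $L(s,\chi)$ are symmetric because $\chi$ is real.

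The main obstacle is small and is mostly bookkeeping: one sequence has to work simultaneously for all three zero sets and for all coordinates. The merging pigeonhole step above is designed for exactly this. One also has to check that the strips used for the final contours stay inside the region where the local formula holds. The leftmost strip, $\Re s\ge 1/2-\eps_k-\eta_k>1/3$, is inside it. For the term $2s$ we have $\Re(2s)\ge 1-2\eps_k-2\eta_k>0$, which is also inside, and RH is not even needed for this lemma.
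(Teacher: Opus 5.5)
Your proposal is correct and follows essentially the same route as the paper. In each unit interval you find a height at distance $\gg 1/\log$ from every ordinate of $\zeta$, of $L(s,\chi)$, and every halved zeta ordinate, and then apply the standard local partial-fraction formula to get the $\log^2$ bound; your gap-midpoint pigeonhole is equivalent to the paper's removal of neighborhoods of radius $c/\log R$ from $[R+1/4,R+3/4]$, and your remarks that only ordinates matter (since $u_j$ is real) and that negative heights follow by conjugation symmetry correctly fill in points the paper leaves implicit.
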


\begin{proof}
The lemma chooses horizontal edges that stay away from the zeros, where a
logarithmic derivative could become large.  This is the standard
admissible-height argument.  For a fixed Dirichlet
$L$-function and for $\zeta$, the local zero count in an interval of bounded
length is $O(\log T)$, and the usual partial-fraction formula gives
$F'/F(\sigma+iT)\ll\log^2 T$ whenever $T$ stays a distance
$\gg1/\log T$ from the relevant zero ordinates; see
\cite[Proposition~5.7]{IwaniecKowalski04} and
\cite[Lemmas~12.1--12.2]{MontgomeryVaughan07}.  In each interval
$[R+1/4,R+3/4]$ remove neighborhoods of radius $c/\log R$ around the
ordinates of zeros of $\zeta$, of $L(s,\chi)$, and around half the ordinates
of zeros of $\zeta$.  For small fixed $c>0$ the removed length is $<1/2$, so
a point $\tau_R$ remains.  Choosing one such point for each sufficiently
large integer $R$ and relabeling the resulting increasing sequence as
$\tau_\nu$ gives a single admissible sequence for all three functions and
proves \eqref{eq:log-derivative-horizontal}.
\end{proof}

\begin{lemma}[The contour shifts are valid]\label{lem:shift-justification}
The contour displacements preceding Lemma~\ref{lem:geometry} may be carried
out successively.  Their residue expansions converge absolutely for each
fixed $T$, and the limiting final-contour integrals are independent of the
heights used to form the intermediate rectangles.
\end{lemma}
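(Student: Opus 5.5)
The plan is to justify each displacement by the usual rectangle argument, with truncation heights taken from the admissible sequence $\tau_\nu$ of Lemma~\ref{lem:admissible-heights}, and to let the Gaussian factor $e^{Q_k(\boldsymbol a)/T^2}$ (with $T$ fixed) kill every horizontal side. We do the shifts in the order described before Lemma~\ref{lem:geometry}: first the signed coordinate $a_k$, then $a_{k-1},\dots,a_1$. At every stage all other coordinates are either on a vertical line (initial or final) or already replaced by a residue. We carry out one shift at a time and argue by induction on the number of coordinates already moved.

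\textbf{Single shift.} For a given coordinate, truncate its vertical line at height $\pm\tau_\nu$ and close the rectangle with the target line $\Re(a_j+u_j)=1-\eps_k$ (or $\frac12-\eps_k$ for $j=k$). By Lemma~\ref{lem:geometry} no kernel denominator vanishes inside the rectangle. So the only poles enclosed are those listed in Lemma~\ref{lem:prime-series}:
\begin{itemize}
\item for $a_k$, the poles at $1/2$ and at the zeros $\rho$ with $|\gamma|<\tau_\nu$;
\item for an ordinary coordinate, only the pole at $1$.
\end{itemize}
Cauchy's theorem expresses the truncated integral as the sum of these residues, the truncated target-line integral, and two horizontal sides.

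\textbf{Horizontal sides.} On a horizontal side, \eqref{eq:log-derivative-horizontal} bounds the moving $F$-factor by $\log^2(2+\tau_\nu)$. The other factors are treated as follows:
\begin{itemize}
\item the remaining $F$-factors are bounded on their current lines by powers of $\log$ of their heights;
\item $|X^{s}|$ is bounded, since the real parts lie in fixed strips;
\item $K_k$ is bounded by Lemma~\ref{lem:geometry}.
\end{itemize}
Because the linear map $\boldsymbol a\mapsto(s,\boldsymbol w)$ is invertible, $\Re Q_k(\boldsymbol a)\le C-c_k\|\Im\boldsymbol a\|^2$ uniformly in the fixed real strips. The Gaussian therefore gives a factor $e^{-c_k\tau_\nu^2/T^2}$ times an integrable Gaussian in the other vertical variables. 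Hence the horizontal contributions, integrated over the remaining free variables, tend to $0$ as $\nu\to\infty$. The same Gaussian bound, with the polylogarithmic growth of the $F$'s on vertical lines (as in the proof of Proposition~\ref{prop:stratum-majorant}), makes all vertical integrals absolutely convergent. Dominated convergence then lets us pass to the limit inside the outer free integrals.

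\textbf{Residue series and independence of heights.} Only the shift of $a_k$ produces an infinite family of residues. Its zero terms are
\[
-m_\rho F_0(a_1+u_1)\cdots F_0(a_{k-1}+u_{k-1})X^{s}e^{Q_k/T^2}K_k
\]
evaluated at $a_k=\rho-u_k$. On the current ordinary lines, $|e^{Q_k/T^2}|\ll e^{-c\gamma^2/T^2}$ times a Gaussian in the free variables. Summing over $\rho$ with $N_\chi(t+1)-N_\chi(t)\ll\log(2+|t|)$ gives absolute convergence for fixed $T$. The same majorant persists through the later ordinary shifts, because each of those only evaluates a coordinate at $1-u_j$ or moves it within a fixed strip; this is precisely the bookkeeping of Proposition~\ref{prop:stratum-majorant}. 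So we can exchange the zero sum with the subsequent shifts.

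Since the limit over $\tau_\nu$ of the full expression equals the original absolutely convergent integral, which does not involve heights, the resulting final-contour integrals and residue sums are independent of the heights chosen. Any other admissible sequence yields the same limits.

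\textbf{Main obstacle.} The delicate point is uniformity along mixed strata. When an ordinary coordinate is shifted while $a_k$ is already a zero residue, we need the horizontal-side estimate to be summable over all $\rho$ simultaneously. I expect to handle this by bounding the product Gaussian by $e^{-c(\tau_\nu^2+\gamma^2)/T^2}$ before summing over zeros, which works because $Q_k$ is positive definite in the imaginary parts.
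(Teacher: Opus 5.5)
Your proposal is correct and follows essentially the same route as the paper. It uses the same ingredients: admissible heights from Lemma~\ref{lem:admissible-heights}, Lemma~\ref{lem:geometry} to exclude kernel poles, Lemma~\ref{lem:prime-series} for the residues, the Gaussian factor $e^{-c_k\tau_\nu^2/T^2}$ to kill the horizontal sides, and zero counting together with the Gaussian for absolute convergence of the residue series. The only difference is technical: you truncate and pass to the limit one coordinate at a time, keeping the other variables on full lines with an integrable Gaussian (your bound $e^{-c(\tau_\nu^2+\gamma^2)/T^2}$ handles the zero family directly). The paper instead truncates all lines at a common height, bounds each horizontal face by $e^{-c_k\tau_\nu^2/T^2}(1+\tau_\nu)^{A_k}$, and takes a single limit under the majorant of Proposition~\ref{prop:stratum-majorant}.
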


\begin{proof}
Choose an admissible height $\tau_\nu$ as in
Lemma~\ref{lem:admissible-heights}, and truncate every vertical line there.
The integrand is meromorphic in the resulting finite region.  By
Lemma~\ref{lem:geometry}, the Perron kernel has no pole there.  By
Lemma~\ref{lem:prime-series}, we know every prime-series pole that is
crossed.  We may therefore apply Cauchy's theorem one coordinate at a time.
If a zero lies on a vertical boundary, move that boundary slightly and then
pass to the limit.  The residue is then counted with its multiplicity.

Because $Q_k$ is a positive-definite real quadratic form after restriction
to imaginary coordinates,
\[
 \left|e^{Q_k(\boldsymbol\sigma+i\boldsymbol t)/T^2}\right|
 =e^{Q_k(\boldsymbol\sigma)/T^2-Q_k(\boldsymbol t)/T^2}.
\]
On a horizontal face, this decay is
$\exp\{-c_k\tau_\nu^2/T^2\}$, while
Lemma~\ref{lem:admissible-heights}, the identities
\eqref{eq:F0}--\eqref{eq:Fchi}, and the bounded holomorphic remainders
bound every prime-series factor by a power of
$\log(2+\tau_\nu)$.  The remaining truncated coordinates range
over intervals of length $O(\tau_\nu)$, and every kernel denominator has
real part bounded below as in Lemma~\ref{lem:geometry}.  Thus, for fixed
$X$ and $T$, the integral over any horizontal face is
\[
 \ll_{k,X,T}
 e^{-c_k\tau_\nu^2/T^2}(1+\tau_\nu)^{A_k}
 \longrightarrow0
\]
for some $A_k>0$.  On final faces and residue strata,
Proposition~\ref{prop:stratum-majorant} provides a majorant uniform in
$\boldsymbol u$ and in the truncation height.  This proves absolute
convergence for fixed $T$ and independence of the admissible sequence.
\end{proof}

\begin{proposition}[All other residue terms are smaller]\label{prop:remainders}
Let
\[
 \alpha_k:=1-\frac{1}{2k}.
\]
The leading scale is $X^{\alpha_k}$.  The only strata that can reach
this scale are those in which every ordinary prime factor contributes its
pole at $1$ and the signed factor contributes either the pole at $1/2$ or a
zero of $L(s,\chi)$.  Every other final-contour or partial-residue term is
smaller by a fixed power of $X$:
\begin{equation}\label{eq:remainder-power}
 O_k\left(
 X^{\alpha_k-\kappa_k}(\log X)^{M_k^{\mathrm{rem}}}\right)
\end{equation}
for some $\kappa_k>0$ and $M_k^{\mathrm{rem}}>0$.
\end{proposition}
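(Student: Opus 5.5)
The plan is to read off the size of each stratum from the real part of $s(\boldsymbol a)=\frac1k\sum_j a_j$, and then to combine this with Proposition~\ref{prop:stratum-majorant}. That proposition bounds every final-contour or residue term by $X^\beta(\log T)^{M_k}$, where $\beta=\Re s$ on that term. Since $T=\exp\{(\log X)^2\}$, we have $(\log T)^{M_k}=(\log X)^{2M_k}$, so the whole task is to show that $\beta\le\alpha_k-\kappa_k$ on every stratum other than the leading ones. Each coordinate $a_j$ ends up in one of the following positions, and each position has a known real part.
\begin{itemize}
\item An ordinary coordinate $a_j$ with $j<k$ either takes the residue at $a_j+u_j=1$, giving $\Re a_j=1-u_j$, or stays on its final line, giving $\Re a_j=1-\eps_k-u_j$.
\item The signed coordinate either takes the residue at $1/2$ or at a zero $\rho$, giving $\Re a_k=1/2-u_k$, or stays on its final line, giving $\Re a_k=1/2-\eps_k-u_k$.
\end{itemize}
By Lemma~\ref{lem:prime-series} and the description of the shifts, no other poles are crossed. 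In particular the ordinary contours cross only the pole at $1$, and the half-zero poles lie at $\Re=1/4$, which is never reached.

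We have $k\,\Re s=\sum_j\Re a_j\le (k-1)+\tfrac12-\sum_j u_j-\eps_k\cdot\#\{\text{coordinates left on final lines}\}$. The leading strata, those in which all $k-1$ ordinary coordinates take the pole at $1$ and the signed coordinate takes $1/2$ or a zero, have $\Re s=\alpha_k-\frac1k\sum_j u_j\le\alpha_k$. Every other stratum has at least one coordinate on a final line, so $\Re s\le\alpha_k-\eps_k/k$. Taking $\kappa_k=\eps_k/k=1/(16k^2)$ gives \eqref{eq:remainder-power} with $M_k^{\mathrm{rem}}=2M_k$, uniformly in $\boldsymbol u$ in the cube. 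For strata in which the signed coordinate is a zero and some ordinary coordinates remain free, Proposition~\ref{prop:stratum-majorant} already includes the sum over all zeros, so absolute convergence is available there too.

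Two points need care. The first is the order of the shifts: the signed contour is moved first, so its residues at $1/2$ and at the zeros come with every ordinary coordinate still on $\Re a_j=\sigma_0$. Those ordinary coordinates are then moved within each signed-residue stratum, and I must check that the stratum list above is exhaustive. This follows because each coordinate moves exactly once, and Lemma~\ref{lem:geometry} guarantees that no kernel pole in $s$ or $w_j$ is crossed. Hence the only residues are prime-series residues, each pinned to one coordinate.

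The second point, which I expect to be the main obstacle, is coincident singular points. When an ordinary coordinate sits at its pole $1-u_j$, the other factors evaluated there are regular, because the kernel denominators stay away from zero by Lemma~\ref{lem:geometry}. So the residues are simple products and carry no extra logarithmic powers that could enlarge the leading strata beyond what the majorant already allows. The accounting therefore reduces to the linear bound on $\Re s$ above, and the residual $(\log X)$-power loss is absorbed into $M_k^{\mathrm{rem}}$.
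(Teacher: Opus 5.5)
Your proposal is correct and is essentially the paper's own argument. You compute $\Re s$ on each stratum as the average of the coordinate real parts, observe that every non-principal stratum leaves at least one coordinate on a final line and so loses at least $\eps_k/k$, and then invoke Proposition~\ref{prop:stratum-majorant} with $\log T=(\log X)^2$. The paper also records the vertical dimension of each stratum and explicitly sums over the finitely many ($3\cdot2^{k-1}$) structural types; you use that summation implicitly and should state it.
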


\begin{proof}
Let $J\subseteq\{1,\ldots,k-1\}$ be the set of ordinary coordinates at
which the residue at $1-u_j$ is taken.  Every ordinary coordinate outside
$J$ remains on
$\Re(a_j+u_j)=1-\eps_k$.  The signed coordinate has one of three forms:
it is the residue at $1/2-u_k$, a zero residue $\rho-u_k$, or it remains
on the final line
$\Re(a_k+u_k)=1/2-\eps_k$.  The three possibilities are summarized below.  In the zero row we count the
zero ordinate $\gamma$ as one vertical variable.
\[
\begin{array}{c|c|c}
\text{signed coordinate}&\text{vertical variables}&\Re s\\ \hline
1/2-u_k&k-1-|J|&
\alpha_k-\dfrac{(k-1-|J|)\eps_k}{k}-\dfrac{\sum u_i}{k}\\[5pt]
\rho-u_k&k-|J|&
\alpha_k-\dfrac{(k-1-|J|)\eps_k}{k}-\dfrac{\sum u_i}{k}\\[5pt]
\text{final line}&k-|J|&
\alpha_k-\dfrac{(k-|J|)\eps_k}{k}-\dfrac{\sum u_i}{k}
\end{array}
\]
For completeness, we now read the three rows separately.  If the signed
coordinate is the pole $1/2-u_k$, then
\[
 d=k-1-|J|,\qquad
 \Re s=\alpha_k-\frac{(k-1-|J|)\eps_k}{k}
 -\frac{u_1+\cdots+u_k}{k}.
\]
If it is a zero residue $\rho-u_k$, then
\[
 d=k-|J|,\qquad
 \Re s=\alpha_k-\frac{(k-1-|J|)\eps_k}{k}
 -\frac{u_1+\cdots+u_k}{k}.
\]
Finally, if the signed coordinate stays on its final line, then
\[
 d=k-|J|,\qquad
 \Re s=\alpha_k-\frac{(k-|J|)\eps_k}{k}
 -\frac{u_1+\cdots+u_k}{k}.
\]
In the zero-residue case, the extra dimension is the zero ordinate $\gamma$;
in the other cases all dimensions are continuous contour variables.

These formulas prove, in particular, the bound
\begin{equation}\label{eq:stratum-exponent}
 \Re s\leq
 \begin{cases}
 \displaystyle
 \alpha_k-\frac{(k-1-|J|)\eps_k}{k},
   &\text{signed residue},\\[6pt]
 \displaystyle
 \alpha_k-\frac{(k-|J|)\eps_k}{k},
   &\text{signed final line}.
 \end{cases}
\end{equation}
The two principal cases are therefore exactly the ones in which all
$k-1$ ordinary coordinates are replaced by their residues at $1$ and the
signed coordinate is replaced either by the pole at $1/2$ or by a zero of
$L(s,\chi)$.  Every other case loses at least
$\eps_k/k=1/(16k^2)$ in the exponent of $X$.  Thus the unwanted terms
have a \emph{fixed power saving}: they are smaller by a factor $X^{-c_k}$ for
some $c_k>0$, not merely by a logarithm.  The negative
$u$-term only makes the saving larger.

It remains to bound the vertical part of each stratum.  The total vertical
dimension $d$ in the table never exceeds $k$.  Proposition~\ref{prop:stratum-majorant}
selects $d$ independent kernel denominators and gives, uniformly in
$\boldsymbol u\in[0,\eta_k]^k$,
\[
 \text{stratum}\ll_k
 X^{\Re s}(\log T)^{M_k^{\mathrm{str}}},
\]
where in the zero-residue row the sum over all zeros, with multiplicity,
is already included.  There are only $3\cdot2^{k-1}$ structural types,
and the zero family is handled by the single zero-stratum estimate.
Consequently the total nonprincipal contribution is
\[
 \ll_k X^{\alpha_k-\eps_k/k}(\log T)^{B_k'}
\]
for some $B_k'>0$.  Since $\log T=(\log X)^2$, this proves
\eqref{eq:remainder-power}; for example one may take
$\kappa_k=1/(32k^2)$ after enlarging the logarithmic exponent.
\end{proof}

The contour shift gives the following decomposition: the smoothed count is
the deterministic residue at $1/2$, plus the residues coming from the zeros
of $L(s,\chi)$, plus an error that is smaller by a fixed power of $X$.

\begin{proposition}[Main contour decomposition]\label{prop:master-contour}
Let $X=N+\tfrac12$ with $N\in\N$ and
$T=\exp\{(\log X)^2\}$.  Recall the regularized sum
$\mathcal G_{k,T}$ from \eqref{eq:regularized-G}.
For $z=1/2$ put $b_z=1/2$, and for a zero
$z=\rho=1/2+i\gamma$ of $L(s,\chi)$ put $b_z=m_\rho$.  Set
\[
 \boldsymbol a_z(\boldsymbol u)
 :=(1-u_1,\ldots,1-u_{k-1},z-u_k)
\]
and
\begin{equation}\label{eq:main-residue-integral}
 \mathcal I_z(X,T):=
 -b_z\int_{[0,\eta_k]^k}
 X^{(k-1+z-u_1-\cdots-u_k)/k}
 e^{Q_k(\boldsymbol a_z(\boldsymbol u))/T^2}
 K_k(\boldsymbol a_z(\boldsymbol u))
 \dd\boldsymbol u.
\end{equation}
Then
\begin{equation}\label{eq:master-contour}
 \int_{[0,\eta_k]^k}
 \mathcal G_{k,T}(X;\boldsymbol u)\dd\boldsymbol u
 =
 \mathcal I_{1/2}(X,T)+\sum_\rho\mathcal I_\rho(X,T)
 +\mathcal R_k(X),
\end{equation}
where the zero sum and all contour integrals converge absolutely and
\begin{equation}\label{eq:master-remainder}
 \mathcal R_k(X)
 \ll_k
 X^{\alpha_k-\kappa_k}(\log X)^{M_k^{\mathrm{rem}}}.
\end{equation}
\end{proposition}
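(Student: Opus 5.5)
The plan is to start from Proposition~\ref{prop:perron} for each fixed $\boldsymbol u\in[0,\eta_k]^k$ and carry out the contour displacements described before Lemma~\ref{lem:geometry}. First the signed contour goes to $\Re(a_k+u_k)=\tfrac12-\eps_k$. Then the ordinary contours $a_{k-1},\dots,a_1$ go to $\Re(a_j+u_j)=1-\eps_k$. Lemma~\ref{lem:shift-justification} will justify each shift: it uses the admissible heights of Lemma~\ref{lem:admissible-heights} and the vanishing of horizontal faces through the Gaussian factor. Lemma~\ref{lem:geometry} guarantees that no kernel pole is crossed. Lemma~\ref{lem:prime-series} gives the complete list of poles that are crossed, namely the pole at $1/2-u_k$ (residue $-1/2$) and the zeros $\rho-u_k$ (residue $-m_\rho$) in the signed coordinate, and only $1-u_j$ (residue $1$) in each ordinary coordinate. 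The integral then becomes a finite sum over residue strata indexed by $J\subseteq\{1,\dots,k-1\}$ together with the three possible states of the signed coordinate.

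The principal strata are $J=\{1,\dots,k-1\}$ with the signed coordinate at $1/2$ or at a zero. In these strata the integrand is evaluated at $\boldsymbol a_z(\boldsymbol u)$. The $F$-factors contribute the residue product $(-b_z)\cdot 1^{k-1}$, and $X^{s}$ becomes $X^{(k-1+z-\sum u_j)/k}$. The remaining factor is $e^{Q_k/T^2}K_k$. Integrating over $\boldsymbol u$ in the cube then gives exactly $\mathcal I_z(X,T)$ in \eqref{eq:main-residue-integral}. I will interchange the $\boldsymbol u$-integral with the zero sum using Lemma~\ref{lem:principal-zero-decay}. That lemma gives $\sup_{\boldsymbol u}|K_k(\boldsymbol a_\rho(\boldsymbol u))|\ll_k(1+|\gamma|)^{-k}$, which is summable against multiplicities. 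I will also use the trivial bounds $|X^{s}|\le X^{\alpha_k}$ and $|e^{Q_k/T^2}|\ll1$, the latter valid because $Q_k$ has bounded real part at these points. Fubini then yields absolute convergence of $\sum_\rho\mathcal I_\rho$.

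All other strata, including the pure final-contour integral, form $\mathcal R_k(X)$. For each fixed $\boldsymbol u$, Proposition~\ref{prop:remainders} bounds each of them by $X^{\alpha_k-\kappa_k}(\log X)^{M_k^{\mathrm{rem}}}$ uniformly in $\boldsymbol u$, and this bound already includes the whole zero family on the partial zero strata, via Proposition~\ref{prop:stratum-majorant}. Integrating over the cube, which has volume $\eta_k^k$, preserves the bound and gives \eqref{eq:master-remainder}.

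The main obstacle is the legitimacy of interchanging the $\boldsymbol u$-integration with the stratum decomposition and the zero sums. The residue terms depend on $\boldsymbol u$ through pole locations that move with $\boldsymbol u$. To handle this, I will use the uniformity in $\boldsymbol u$ stated in Propositions~\ref{prop:stratum-majorant} and~\ref{prop:remainders} and in Lemma~\ref{lem:admissible-heights}, where a single height sequence serves all $\boldsymbol u$. These give an integrable majorant, so dominated convergence and Fubini apply. I will also check measurability and continuity in $\boldsymbol u$ of each stratum integral, which is clear because the integrands are holomorphic in the shifted variables away from the fixed contours.
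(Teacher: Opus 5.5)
Your proposal is correct and follows the paper's proof. You shift the contours for each fixed $\boldsymbol u$, identify the two principal strata with $\mathcal I_z$ via the residues of Lemma~\ref{lem:prime-series}, bound all other strata by Proposition~\ref{prop:remainders}, and use the $\boldsymbol u$-uniform majorants of Proposition~\ref{prop:stratum-majorant} and Lemma~\ref{lem:principal-zero-decay} to integrate over the cube and interchange with the zero sums. The only difference is the order of limits: the paper integrates the height-truncated finite identity over $\boldsymbol u$ before letting $\tau_\nu\to\infty$, while you pass to the limit pointwise in $\boldsymbol u$ and then apply Fubini. The same majorant justifies either order.
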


\begin{proof}
For a fixed $\boldsymbol u\in[0,\eta_k]^k$, apply the contour shifts
justified by Lemma~\ref{lem:shift-justification}.  Taking every ordinary
residue and the signed residue at $z$ gives the integrand in
\eqref{eq:main-residue-integral}; the residue signs and multiplicities are
those in Lemma~\ref{lem:prime-series}.  All remaining strata have the
uniform bound of Proposition~\ref{prop:remainders}.  Integration over the
fixed cube therefore gives \eqref{eq:master-remainder}.

Several limiting operations are present: the contour height tends to
infinity, the zero sum becomes infinite, and we integrate over
$\boldsymbol u$.  We take them in the following order, always under an absolute
majorant.

First, truncate every vertical contour at the same admissible height
$\tau_\nu$.  At this stage there are only finitely many contour pieces and
finitely many zero residues, so Cauchy's theorem gives a finite identity.
Second, apply Proposition~\ref{prop:stratum-majorant} to each term.  Its majorant is
independent of $\boldsymbol u$ and of $\tau_\nu$.  Third, integrate this
finite identity over $[0,\eta_k]^k$.  Fourth, let $\tau_\nu\to\infty$ in the
continuous contour terms.  Dominated convergence applies because of
\eqref{eq:uniform-stratum-integral}.  Fifth, let the truncated zero sum tend
to the full zero sum; Proposition~\ref{prop:stratum-majorant} gives an absolutely
summable majorant, again uniform in $\boldsymbol u$.  Finally, Tonelli's
theorem for the absolute majorant allows the $\boldsymbol u$-integral to be
interchanged with the full zero sum.  Hence every passage is made under an
absolute bound, and \eqref{eq:master-contour} follows with the stated
absolute convergence.
\end{proof}

To control the remaining error terms we need cancellation among primes in
the two residue classes modulo $4$.  RH for $L(s,\chi)$ gives the standard
estimate
\begin{equation}\label{eq:A-grh}
 A(y):=\sum_{p\le y}\chi(p)\ll y^{1/2}\log(2y).
\end{equation}
Partial summation gives, uniformly for $0\le\vartheta\le k\eta_k$ and for
any subinterval of $[Y,2Y]$,
\begin{equation}\label{eq:weighted-prime}
 \sum_{p\in I}\frac{\chi(p)}{p^\vartheta}
 \ll_k Y^{1/2-\vartheta}\log(2Y).
\end{equation}

\begin{lemma}[A simple exponent bound]\label{lem:exponent-optimization}
Let $0\le v_1\le\cdots\le v_r$ and let $e_1,\ldots,e_r$ be positive
integers.  If
\[
 \sum_{i=1}^r e_iv_i\le1,
\]
then
\begin{equation}\label{eq:exponent-optimization}
 \sum_{i<r}v_i+\frac12v_r\le1-\frac1{2r}.
\end{equation}
\end{lemma}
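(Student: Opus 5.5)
The plan is to reduce to the case $e_i=1$ and then use only the fact that the largest of the $v_i$ is at least their average. Put
\[
 t:=\sum_{i=1}^r v_i,\qquad S:=\sum_{i<r}v_i=t-v_r .
\]

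\emph{Step 1 (discard the multiplicities).} Since every $e_i$ is a positive integer and every $v_i\ge0$, we have $e_iv_i\ge v_i$. Hence the hypothesis gives
\[
 t\le\sum_{i=1}^r e_iv_i\le1 .
\]

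\emph{Step 2 (rewrite the target).} The left side of \eqref{eq:exponent-optimization} equals
\[
 S+\tfrac12v_r=t-\tfrac12v_r .
\]
So we want $v_r$ to be as large as possible relative to $t$.

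\emph{Step 3 (use the ordering).} Because $v_r$ is the largest of $v_1,\ldots,v_r$, it is at least their mean, so $v_r\ge t/r$. Therefore
\[
 S+\tfrac12v_r\le t-\frac{t}{2r}=t\Bigl(1-\frac1{2r}\Bigr).
\]

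\emph{Step 4 (conclude).} The factor $1-\frac1{2r}$ is positive, and $0\le t\le1$ by Step 1. Multiplying out gives $t\bigl(1-\frac1{2r}\bigr)\le1-\frac1{2r}$, which is \eqref{eq:exponent-optimization}.

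There is no real obstacle here; the only point to notice is Step 3, which is where the monotonicity hypothesis $v_1\le\cdots\le v_r$ enters. Equality holds when $e_i=1$ and $v_1=\cdots=v_r=1/r$. This matches the exponent $\alpha_k=1-\frac1{2k}$ when $r=k$, so the lemma is what will identify $X^{\alpha_k}$ as the natural scale when bounding the remaining error terms.
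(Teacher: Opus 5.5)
Your proof is correct and is essentially the paper's argument: both drop the multiplicities to get $\sum_i v_i\le1$, rewrite the left side as $\sum_i v_i-\tfrac12v_r$, and use $v_r\ge r^{-1}\sum_i v_i$ to conclude. Your equality case ($e_i=1$, $v_i=1/r$) is also right and shows the bound is sharp.
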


\begin{proof}
Since every $e_i\ge1$, we have $\sum_i v_i\le1$.  Since $v_r$ is the
largest of the $r$ numbers,
$v_r\ge r^{-1}\sum_i v_i$.  Therefore
\[
 \sum_{i<r}v_i+\frac12v_r
 =\sum_i v_i-\frac12v_r
 \le\left(1-\frac1{2r}\right)\sum_i v_i
 \le1-\frac1{2r}.
\]
\end{proof}

\begin{lemma}[Ordered dyadic bound]\label{lem:dyadic-bound}
Let $e_1+\cdots+e_r=k$, with all $e_i\ge1$, and let
$0\le\vartheta_i\le k\eta_k$.  Under RH for $L(s,\chi)$,
\begin{equation}\label{eq:dyadic-bound}
 \sum_{\substack{q_1<\cdots<q_r\\q_1^{e_1}\cdots q_r^{e_r}\le X}}
 \chi(q_r)\prod_{i=1}^r q_i^{-\vartheta_i}
 \ll_k X^{1-\frac1{2r}}(\log X)^{O_k(1)}.
\end{equation}
\end{lemma}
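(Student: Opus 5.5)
The plan is to fix the smaller primes $q_1<\cdots<q_{r-1}$ and sum over the largest prime $q_r$ first, using the RH bound \eqref{eq:weighted-prime} for the $\chi$-sum. For fixed $q_1,\dots,q_{r-1}$ put $m=q_1^{e_1}\cdots q_{r-1}^{e_{r-1}}$. The inner sum is
\[
 \sum_{q_{r-1}<q\le (X/m)^{1/e_r}}\chi(q)\,q^{-\vartheta_r}.
\]
Split the range of $q$ dyadically into $O(\log X)$ subintervals of $[Y,2Y]$ with $Y\le Y_0:=(X/m)^{1/e_r}$. By \eqref{eq:weighted-prime} each piece is $\ll_k Y^{1/2}\log(2Y)$, since $\vartheta_r\ge0$ only helps. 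The lower endpoint $q_{r-1}$ only truncates one dyadic block, which \eqref{eq:weighted-prime} allows for arbitrary subintervals. Hence the inner sum is $\ll_k Y_0^{1/2}(\log X)^2$. The factors $q_i^{-\vartheta_i}$ for $i<r$ have modulus at most $1$, so it remains to bound
\[
 \sum_{\substack{q_1<\cdots<q_{r-1}<Y_0\\ m\,q_{r-1}^{e_r}\le X}}\Bigl(\frac{X}{m}\Bigr)^{1/(2e_r)},
\]
where the constraint $q_{r-1}<Y_0$ comes from requiring a nonempty range for $q_r$.

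Next, decompose this outer sum dyadically as well. Let $q_i\in[X^{v_i},2X^{v_i})$, giving $O((\log X)^{r-1})$ boxes. The number of tuples in a box is $\ll X^{v_1+\cdots+v_{r-1}}$, and the summand is $\ll X^{v_r/2}$, where $X^{v_r}:=Y_0$. The conditions are $\sum_i e_iv_i\le 1+O(1/\log X)$ with $v_1\le\cdots\le v_r$ up to bounded dyadic slack. The factors $2^{O_k(1)}$ from the slack are absorbed into the implied constant. Lemma~\ref{lem:exponent-optimization} then gives
\[
 \sum_{i<r}v_i+\tfrac12 v_r\le 1-\tfrac1{2r}
\]
for every box. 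Therefore each box contributes $\ll_k X^{1-1/(2r)}(\log X)^{2}$, and summing over the boxes gives \eqref{eq:dyadic-bound} with logarithmic exponent $r+1=O_k(1)$.

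The main obstacle is bookkeeping rather than analysis. The monotonicity $v_{r-1}\le v_r$ holds only approximately, and $v_r$ is determined by the smaller primes rather than being free. The fix is to define $v_r$ exactly by $Y_0=X^{v_r}$, so that $\sum e_iv_i\le1$ up to the dyadic rounding of $v_1,\dots,v_{r-1}$. Then apply Lemma~\ref{lem:exponent-optimization} to slightly perturbed exponents $v_i-O(1/\log X)$; this changes $X^{(\cdot)}$ only by a constant factor.

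A degenerate case needs separate treatment: $Y_0<q_{r-1}$ for all admissible $q_r$, which gives an empty sum. When $Y_0$ is bounded, \eqref{eq:weighted-prime} is trivial, and the bound holds with constants depending on $k$. The case $r=1$ reduces directly to \eqref{eq:weighted-prime} with $Y\le X^{1/k}$, giving $X^{1/(2k)}\le X^{1/2}$.
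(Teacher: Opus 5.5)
Your proof is correct and follows the paper's argument: apply RH for $L(s,\chi)$ through \eqref{eq:weighted-prime} to the largest prime with the smaller primes fixed, put the remaining primes into dyadic boxes, and apply Lemma~\ref{lem:exponent-optimization} to the box exponents; the only cosmetic difference is that you sum $q_r$ over its whole range instead of over a dyadic box. The perturbation step is unnecessary, because if you take $X^{v_i}=Q_i$ to be the lower box endpoints for $i<r$ and set $X^{v_r}:=(X/M)^{1/e_r}$ with $M=\prod_{i<r}Q_i^{e_i}$, then $\sum_i e_iv_i=1$ and $X^{v_{r-1}}\le q_{r-1}<(X/m)^{1/e_r}\le X^{v_r}$ hold exactly, and your summand is at most $X^{v_r/2}$.
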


\begin{proof}
We group each $q_i$ into a dyadic interval $[Q_i,2Q_i)$.  This reduces the
range of prime sizes to finitely many boxes whose exponents can be optimized.
For fixed $q_1,\ldots,q_{r-1}$, the last prime lies in a subinterval of
$[Q_r,2Q_r)$, so \eqref{eq:weighted-prime} applies.  A box contributes
\[
 \ll_k\left(\prod_{i<r}Q_i^{1-\vartheta_i}\right)
 Q_r^{1/2-\vartheta_r}(\log X)^{O_k(1)}.
\]
Write $Q_i=X^{v_i}$.  The product condition gives
$\sum e_iv_i\le1$.  If we ignore the nonnegative $\vartheta_i$, the exponent
of $X$ in the box is
$\sum_{i<r}v_i+v_r/2$.  Lemma~\ref{lem:exponent-optimization} bounds this by
$1-1/(2r)$.  There are only $O_k((\log X)^r)$ boxes, proving the claim.
\end{proof}

We next collect three kinds of terms that are not part of the main residue
calculation.  The ``tail'' comes from large auxiliary variables
$u_j$; the ``diagonal'' comes from equal neighboring primes in the smoothed
ordered sum; and the repeated-prime term converts the squarefree calculation
back to the full fixed-$k$ layer.  All three will be smaller than the main
scale.

\begin{proposition}[Tail, diagonal, and repeated-prime errors]
\label{lem:large-u}\label{lem:repeated}\label{prop:diagonal}
Let $\eta=\eta_k$ and $\alpha_k=1-1/(2k)$.  Define
\[
 \mathcal T_k^\sharp(X;\eta):=
 \int_{[0,\infty)^k\setminus[0,\eta]^k}G_k(X;\boldsymbol u)\,\dd\boldsymbol u,
\]
The superscript $\sharp$ indicates that this is the sharp, unsmoothed tail.
\[
 \mathcal T_{k,T}(X;\eta):=
 \int_{[0,\infty)^k\setminus[0,\eta]^k}
 \mathcal G_{k,T}(X;\boldsymbol u)\,\dd\boldsymbol u.
\]
Then for some $\lambda_k>0$,
\begin{equation}\label{eq:sharp-tail-bound}
 \mathcal T_k^\sharp(X;\eta)
 \ll_k X^{\alpha_k-\lambda_k}(\log X)^{O_k(1)}.
\end{equation}
Moreover,
\begin{equation}\label{eq:regularized-tail-identity}
 \mathcal T_{k,T}(X;\eta)
 =\mathcal T_k^\sharp(X;\eta)+\mathcal T_k^\Delta(X;T,\eta)
 +O_{k,M}(X^{-M}),
\end{equation}
where $\mathcal T_k^\Delta(X;T,\eta)$ denotes the adjacent-equality part
of the weighted regularized sum, namely the same sum as
$\mathcal E_k^\Delta(X;T)$ with the extra factor $W_\eta(\boldsymbol p)$.
Under RH for $L(s,\chi)$,
\begin{equation}\label{eq:tail-diagonal-bound}
 \mathcal T_k^\Delta(X;T,\eta),\quad \mathcal E_k^\Delta(X;T)
 \ll_k X^{1-\frac1{2(k-1)}}(\log X)^{O_k(1)}
 \qquad(k\ge2).
\end{equation}
Finally, if
\[
 R_k(X):=\sum_{\substack{n\le X\\\Om(n)=k,\ \om(n)<k}}
 \chi(\Pplus(n)),
\]
then
\begin{equation}\label{eq:repeated}
 R_k(X)\ll_k X^{1-\frac1{2(k-1)}}(\log X)^{O_k(1)}\quad(k\ge3),
 \qquad R_2(X)=A(\sqrt X).
\end{equation}
\end{proposition}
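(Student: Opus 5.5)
The plan is to make the weights explicit by integrating out $\boldsymbol u$, and then reduce every piece to the ordered dyadic bound of Lemma~\ref{lem:dyadic-bound}. For a prime tuple $\boldsymbol p$, termwise integration (justified by positivity) gives
\[
 W_\eta(\boldsymbol p):=\int_{[0,\infty)^k\setminus[0,\eta]^k}\prod_{j=1}^k\frac{\log p_j}{p_j^{u_j}}\dd\boldsymbol u
 =1-\prod_{j=1}^k\bigl(1-p_j^{-\eta}\bigr)
 =\sum_{\emptyset\neq S\subseteq\{1,\ldots,k\}}(-1)^{|S|+1}\prod_{j\in S}p_j^{-\eta}.
\]
Thus $0\le W_\eta\le1$, and $\mathcal T_k^\sharp(X;\eta)$ is the sum over $p_1<\cdots<p_k$ with $p_1\cdots p_k\le X$ of $\chi(p_k)W_\eta(\boldsymbol p)$. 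Given this, the identity \eqref{eq:regularized-tail-identity} is immediate from the weighted form of Lemma~\ref{lem:desmoothing} with $W=W_\eta$. It gives the sharp weighted sum, the adjacent-equality sum with $W_\eta$ inserted, and an error $O_{k,M}(X^{-M})$.

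For \eqref{eq:sharp-tail-bound} I expand $W_\eta$ as above into $2^k-1$ sums. Each sum has the shape of Lemma~\ref{lem:dyadic-bound} with $r=k$, all $e_i=1$, and $\vartheta_j=\eta$ for $j\in S$, $\vartheta_j=0$ otherwise. In the dyadic proof I retain the factor $X^{-\eta v_j}$ for one fixed $j\in S$ rather than discarding it, so the box exponent becomes
\[
 f(\boldsymbol v)=\sum_{i<k}v_i+\tfrac12v_k-\eta v_j .
\]
I then split into two cases. If $v_j\ge1/(2k)$, Lemma~\ref{lem:exponent-optimization} gives $f\le\alpha_k-\eta/(2k)$. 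If $v_j<1/(2k)$, write $\sigma=\sum v_i\le1$ and use $v_k\ge(\sigma-v_j)/(k-1)$; this gives
\[
 f\le\sigma\Bigl(1-\tfrac1{2(k-1)}\Bigr)+\tfrac{v_j}{2(k-1)}\le1-\tfrac1{2(k-1)}+\tfrac1{4k(k-1)},
\]
which is smaller than $\alpha_k$ by the fixed amount $1/(4k(k-1))$. Taking $\lambda_k=\min\{\eta_k/(2k),1/(4k(k-1))\}$ and summing the $O_k((\log X)^k)$ boxes proves \eqref{eq:sharp-tail-bound}. This case split is where I expect the main difficulty. The equality case of Lemma~\ref{lem:exponent-optimization} is exactly the balanced box $v_i=1/k$, and the saving must be shown to survive even when the $p_j^{-\eta}$ factor sits on a very small prime. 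The dichotomy above is the way I intend to handle that.

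For \eqref{eq:tail-diagonal-bound} I would first discard configurations in which some non-equal adjacent pair has $p_{j+1}<p_j$, or in which the product condition is violated. By \eqref{eq:h-error} and the counting in the proof of Lemma~\ref{lem:desmoothing}, these cost only $O_{k,M}(X^{-M})$, and the same holds when $h_T$ is replaced by the sharp indicator on non-equal pairs. Each remaining tuple is weakly increasing, with at least one adjacent equality, and every equal adjacent pair carries the factor $h_T(1)=1/2$. Grouping equal primes therefore gives finitely many sharp sums over $q_1<\cdots<q_r$ with $r\le k-1$ and $\prod q_i^{e_i}\le X$, weighted by $\chi(q_r)$ times a constant power of $1/2$. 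The character lands on the largest prime $q_r$ even when $e_r\ge2$. When $W_\eta$ is present, its expansion contributes factors $q_i^{-\vartheta_i}$ with $\vartheta_i\le k\eta_k$, which is exactly the range allowed in Lemma~\ref{lem:dyadic-bound}. That lemma then gives $X^{1-1/(2r)}(\log X)^{O_k(1)}\le X^{1-1/(2(k-1))}(\log X)^{O_k(1)}$ for both diagonal sums.

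Finally, for \eqref{eq:repeated} I write each $n$ with $\Om(n)=k$ and $\om(n)=r<k$ as $q_1^{e_1}\cdots q_r^{e_r}$ with $q_1<\cdots<q_r$, so that $\Pplus(n)=q_r$. Summing over the finitely many exponent patterns and applying Lemma~\ref{lem:dyadic-bound} with $\vartheta_i=0$ gives the bound for $k\ge3$, since $r\le k-1$. For $k=2$ the only such $n$ are $p^2\le X$, so $R_2(X)=\sum_{p\le\sqrt X}\chi(p)=A(\sqrt X)$.
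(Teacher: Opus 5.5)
Your proposal is correct and follows the paper's proof essentially step for step. It uses the same exact weight $W_\eta$, the same dyadic case split on whether a penalized $v_j$ is at least $1/(2k)$ (giving the same $\lambda_k=\min\{\eta_k/(2k),1/(4k(k-1))\}$), the weighted desmoothing lemma for the identity, and the same reduction of the diagonal and repeated-prime terms to Lemma~\ref{lem:dyadic-bound} via maximal constant runs carrying the factor $h_T(1)=1/2$. Your averaging inequality $v_k\ge(\sigma-v_j)/(k-1)$ is a slightly cleaner form of the paper's second case: it holds for every $\sigma\le1$ and every $j\in S$, including $j=k$, so you need neither the paper's tacit normalization $\sum v_i=1$ nor its separate treatment of $S=\{k\}$.
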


\begin{proof}
Integrating over the complement of $[0,\eta]^k$ inserts the exact weight
\[
 W_\eta(\boldsymbol p)=1-\prod_{j=1}^k(1-p_j^{-\eta})
 =\sum_{\emptyset\ne S\subseteq\{1,\ldots,k\}}
 (-1)^{|S|+1}\prod_{j\in S}p_j^{-\eta}.
\]
Consider one nonempty $S$ and one dyadic box $P_i\le p_i<2P_i$.  By
\eqref{eq:weighted-prime}, its contribution is
\[
 \ll_k(P_1\cdots P_{k-1})P_k^{1/2}
 \prod_{j\in S}P_j^{-\eta}(\log X)^{O_k(1)}.
\]
Writing $P_i=X^{v_i}$ gives $0\le v_1\le\cdots\le v_k$ and
$\sum v_i\le1$.  Without the $S$-penalty the exponent is at most
$\alpha_k$.  If some penalized $v_j\ge1/(2k)$, we save at least
$\eta/(2k)$.  Otherwise either $S=\{k\}$ and $\sum v_i<1/2$, or a penalized
index $j<k$ has $v_j<1/(2k)$; then ordering and $\sum v_i=1$ at a maximum
force
\[
 v_k>\frac1k+\frac1{2k(k-1)},
\]
which saves at least $1/(4k(k-1))$ through the term $-v_k/2$.  Thus
\eqref{eq:sharp-tail-bound} holds, for example with
$\lambda_k=\min\{\eta/(2k),1/(4k(k-1))\}$.

The weighted desmoothing statement in Lemma~\ref{lem:desmoothing}, applied
to the finite expansion of $W_\eta$, gives
\eqref{eq:regularized-tail-identity}.  It remains only to identify the
adjacent-equality terms.  Partition such a prime tuple into its maximal
constant runs.  If the run lengths are $e_1,\ldots,e_r$, then
$e_1+\cdots+e_r=k$ with $r\le k-1$, and the corresponding block primes are
strictly ordered after desmoothing.  Every order kernel inside a constant run
has value $h_T(1)=1/2$, so a fixed block pattern contributes only the harmless
factor $2^{-(k-r)}$.  Moreover a monomial
$\prod_{j\in S}p_j^{-\eta}$ from $W_\eta$ collapses to
$\prod_{i=1}^r q_i^{-\vartheta_i}$ with
\[
 0\leq\vartheta_i=\eta\,\#(S\cap\text{the $i$th run})\leq k\eta.
\]
Thus every diagonal block is exactly of the form controlled by
Lemma~\ref{lem:dyadic-bound}, up to the same super-polynomial desmoothing
error.  This proves \eqref{eq:tail-diagonal-bound}.  The same block argument
with no smoothing gives \eqref{eq:repeated}; for $k=2$ the only repeated terms
are $p^2$, so the contribution is $A(\sqrt X)$.
\end{proof}

\section{The fixed-layer explicit formula}\label{sec:main-proof}

The contour argument reduces the problem to the principal residue integrals
$\mathcal I_z(X,T)$ in Proposition~\ref{prop:master-contour}.  After the
logarithmic weights are introduced, the $u$-integral is exponentially
concentrated near $\boldsymbol u=\boldsymbol0$, so its leading term comes from
the remaining smooth factor at that endpoint.

\begin{lemma}[Main residue evaluation]\label{lem:endpoint}
For $z=1/2$, the residue integral is
\begin{equation}\label{eq:endpoint-half}
 -\frac12 C_k(1/2)
 \frac{X^{\alpha_k}}{(\log X)^k}
 +O_k\left(\frac{X^{\alpha_k}}{(\log X)^{k+1}}\right).
\end{equation}
For the zero residues, counted with multiplicity, one has
\begin{equation}\label{eq:endpoint-zeros}
 \sum_\rho \mathcal I_\rho(X,T)
 =
 -\frac{X^{\alpha_k}}{(\log X)^k}
 \sum_\rho m_\rho C_k(\rho)X^{i\gamma/k}
 +O_k\left(\frac{X^{\alpha_k}}{(\log X)^{k+1}}\right).
\end{equation}
The zero series in \eqref{eq:endpoint-zeros} converges absolutely.
\end{lemma}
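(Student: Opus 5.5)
The plan is to evaluate each $\mathcal I_z(X,T)$ as a Laplace-type integral concentrated at $\boldsymbol u=\boldsymbol 0$. Write $L=\log X$. Then
\[
 X^{(k-1+z-u_1-\cdots-u_k)/k}=X^{(k-1+z)/k}\prod_{j=1}^k e^{-u_jL/k},
\]
and $X^{(k-1+z)/k}=X^{\alpha_k}X^{i\gamma/k}$ for $z=\rho$, or $X^{\alpha_k}$ for $z=1/2$. The weight integrates to
\[
 \int_{[0,\eta_k]^k}\prod_j e^{-u_jL/k}\dd\boldsymbol u=\Bigl(\frac kL\Bigr)^k\bigl(1+O_k(X^{-\eta_k/k})\bigr).
\]
We then split the remaining factor as
\[
 e^{Q_k(\boldsymbol a_z(\boldsymbol u))/T^2}K_k(\boldsymbol a_z(\boldsymbol u))
 =K_k(\boldsymbol a_z(\boldsymbol 0))
 +\bigl[K_k(\boldsymbol a_z(\boldsymbol u))-K_k(\boldsymbol a_z(\boldsymbol 0))\bigr]
 +\bigl(e^{Q_k/T^2}-1\bigr)K_k(\boldsymbol a_z(\boldsymbol u)).
\]

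\textbf{Main term.} By Lemma~\ref{lem:kernel-value}, $K_k(\boldsymbol a_z(\boldsymbol 0))=k^{k-1}/\bigl((k-1)!(k-1+z)(1-z)^{k-1}\bigr)$. Multiplying by $(k/L)^k$ gives exactly $C_k(z)/L^k$. With the prefactor $-b_z$, this yields the main terms of \eqref{eq:endpoint-half} and \eqref{eq:endpoint-zeros}. As a check, $\tfrac12C_k(1/2)=B_k$.

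\textbf{Taylor error.} By Lemma~\ref{lem:principal-zero-decay}, which also covers $z=1/2$ (the case $\gamma=0$, where the same geometry applies), the mean value theorem gives
\[
 \bigl|K_k(\boldsymbol a_z(\boldsymbol u))-K_k(\boldsymbol a_z(\boldsymbol 0))\bigr|\ll_k\|\boldsymbol u\|(1+|\gamma|)^{-k}.
\]
Integrating $\|\boldsymbol u\|$ against $\prod_j e^{-u_jL/k}$ gives $O_k(L^{-k-1})$. So each zero has Taylor error $\ll_k X^{\alpha_k}L^{-k-1}m_\rho(1+|\gamma|)^{-k}$.

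\textbf{Gaussian error.} On $\boldsymbol a_z(\boldsymbol u)$ the imaginary parts of $s,w_j$ are $\gamma/k$ and $-j\gamma/k$, while the real parts are bounded. Hence $\Re Q_k\le O_k(1)-c_k\gamma^2$, so $|e^{Q_k/T^2}|\ll_k1$, and
\[
 |e^{Q_k/T^2}-1|\ll_k\min\{1,(1+\gamma^2)/T^2\}.
\]
Since $T=\exp\{(\log X)^2\}$, this error is negligible.

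\textbf{Summing over zeros.} The Taylor errors are summed using the absolutely convergent series $\sum_\rho m_\rho(1+|\gamma|)^{-k}$ from Lemma~\ref{lem:principal-zero-decay}, giving $O_k(X^{\alpha_k}L^{-k-1})$. The Gaussian errors contribute at most
\[
 X^{\alpha_k}L^{-k}\sum_\rho m_\rho\min\{1,\gamma^2/T^2\}(1+|\gamma|)^{-k}.
\]
Splitting at $|\gamma|=T$ and using $N_\chi(t+1)-N_\chi(t)\ll\log(2+t)$ bounds this sum by $O(T^{-1}\log^2T)$ for $k\ge2$, far below $L^{-1}$. The error from the truncation of the $u$-cube, $O(X^{-\eta_k/k})$, is also summable against the same convergent series. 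Absolute convergence of $\sum_\rho m_\rho C_k(\rho)X^{i\gamma/k}$ follows from $|C_k(\rho)|\ll_k(1+|\gamma|)^{-k}$, since $C_k$ has $k$ linear factors in $\gamma$ in its denominator.

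\textbf{Main obstacle.} The delicate step is uniformity in $\gamma$: every error must carry the factor $(1+|\gamma|)^{-k}$, or a comparable summable weight, so that it can be summed over all zeros. The gradient bound in \eqref{eq:principal-zero-decay}, being uniform on the whole cube, is exactly what makes the Taylor step work. The Gaussian factor needs the separate $\min\{1,\gamma^2/T^2\}$ treatment above, because its derivative is not small for $|\gamma|\gtrsim T$.
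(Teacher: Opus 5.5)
Your proposal is correct and follows the paper's argument. You factor out $X^{(k-1+z)/k}$ and evaluate the Laplace-type integral at $\boldsymbol u=\boldsymbol 0$ using the mean-value theorem and the uniform gradient bound of Lemma~\ref{lem:principal-zero-decay}. You identify $k^kK_k(1,\ldots,1,z)=C_k(z)$ and remove the Gaussian by splitting the zero sum at $|\gamma|=T$. The only difference is cosmetic: you strip the Gaussian pointwise on the whole cube before the Taylor step, whereas the paper keeps it inside $\Phi_{z,T}$ and absorbs its $\boldsymbol u$-derivative. That derivative is in fact $O(T^{-1})$ uniformly, so your closing remark that it is ``not small'' for $|\gamma|\gtrsim T$ is inaccurate, though harmless.
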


\begin{proof}
Write $L=\log X$ and
\[
 \Phi_{z,T}(\boldsymbol u)
 =e^{Q_k(\boldsymbol a_z(\boldsymbol u))/T^2}
 K_k(\boldsymbol a_z(\boldsymbol u)).
\]
After factoring $X^{(k-1+z)/k}$ out of
\eqref{eq:main-residue-integral}, the remaining integral is
\[
 J_{z,T}(L):=
 \int_{[0,\eta_k]^k}
 e^{-(L/k)(u_1+\cdots+u_k)}
 \Phi_{z,T}(\boldsymbol u)\dd\boldsymbol u.
\]

For a zero residue $z=1/2+i\gamma$,
Lemma~\ref{lem:principal-zero-decay} gives the $k$ denominator factors
needed for the zero decay.  For the deterministic pole $z=1/2$, the same
bound with $\gamma=0$ follows directly from Lemma~\ref{lem:geometry}.
Together with the Gaussian factor, uniformly on the $\boldsymbol u$-cube,
\begin{equation}\label{eq:Phi-derivatives}
 |\Phi_{z,T}(\boldsymbol u)|
 +\|\nabla\Phi_{z,T}(\boldsymbol u)\|
 \ll_k
 (1+|\gamma|)^{-k}e^{-c_k\gamma^2/T^2}.
\end{equation}
Indeed,
\[
 \Re Q_k(\boldsymbol\sigma+i\boldsymbol t)
 =Q_k(\boldsymbol\sigma)-Q_k(\boldsymbol t),
\]
while on the line in question $Q_k(\boldsymbol t)=c_k'\gamma^2$.
Differentiating the Gaussian produces one extra factor of size
$O_k((1+|\gamma|)T^{-2})$.  This does not spoil the bound.  Indeed, for
any $c'>0$ smaller than the Gaussian constant $c$, the elementary inequality
\[
 y e^{-cy^2}\ll_{c,c'} e^{-c'y^2}\qquad(y\ge0)
\]
absorbs the extra factor after writing $y=|\gamma|/T$.  Thus
\eqref{eq:Phi-derivatives} holds after decreasing the constant $c_k$ if
necessary.  We use the same symbol $c_k$ for the smaller positive constant.

Because of the factor
$e^{-(L/k)(u_1+\cdots+u_k)}$, values of $u_j$ much larger than $1/L$ are
exponentially suppressed.  Thus only a small neighborhood of
$\boldsymbol u=\boldsymbol0$ matters at leading order.  Make the change of
variables $v_j=Lu_j/k$.  The mean-value theorem,
\eqref{eq:Phi-derivatives}, and the exponentially small tail outside
$[0,\eta_kL/k]^k$ give
\begin{equation}\label{eq:laplace-endpoint}
 J_{z,T}(L)
 =
 \left(\frac{k}{L}\right)^k\Phi_{z,T}(\boldsymbol0)
 +O_k\left(
 \frac{(1+|\gamma|)^{-k}e^{-c_k\gamma^2/T^2}}
 {L^{k+1}}\right).
\end{equation}
The mean-value error is bounded by the derivative estimate in
\eqref{eq:Phi-derivatives}, while the omitted tail is exponentially small in
$L$.

By Lemma~\ref{lem:kernel-value},
\[
 k^kK_k(1,\ldots,1,z)=C_k(z).
\]
Equation \eqref{eq:laplace-endpoint} therefore proves
\eqref{eq:endpoint-half}, because at $z=1/2$ the Gaussian differs from
one by $O_k(T^{-2})$.

For the zeros, the zero-counting estimate gives
\[
 \sum_\rho m_\rho(1+|\gamma|)^{-k}<\infty
 \qquad(k\geq2),
\]
so the errors in \eqref{eq:laplace-endpoint} may be summed absolutely.
The contour regularization leaves an auxiliary Gaussian factor in each zero
coefficient.  We now remove this factor.  The worst decay occurs for $k=2$, and therefore
it is enough to use
\[
 |C_k(1/2+i\gamma)|\ll_k(1+|\gamma|)^{-2}.
\]
For $|\gamma|\le T$,
\[
 \left|e^{Q_k(1,\ldots,1,\rho)/T^2}-1\right|
 \ll_k\frac{1+\gamma^2}{T^2}.
\]
Hence, using
$N_\chi(t+1)-N_\chi(t)\ll\log(2+t)$,
\[
 \sum_{|\gamma|\le T}m_\rho |C_k(\rho)|
 \left|e^{Q_k(1,\ldots,1,\rho)/T^2}-1\right|
 \ll_k \frac{1}{T^2}\sum_{n\le T}\log(2+n)
 \ll_k \frac{\log(2T)}{T}.
\]
For $|\gamma|>T$, the Gaussian factor is bounded in absolute value by a
constant depending only on $k$ (in fact by $e^{O_k(T^{-2})}$ times the
decaying Gaussian), so
\[
 \left|e^{Q_k(1,\ldots,1,\rho)/T^2}-1\right|\ll_k1.
\]
Therefore
\[
 \sum_{|\gamma|>T}m_\rho |C_k(\rho)|
 \left|e^{Q_k(1,\ldots,1,\rho)/T^2}-1\right|
 \ll_k\sum_{n>T}\frac{\log(2+n)}{n^2}
 \ll_k\frac{\log(2T)}{T}.
\]
Thus the whole Gaussian-removal error is
$O_k(\log(2T)/T)$, which is stronger than the bound needed here.  Since
$T=\exp\{L^2\}$, it is in particular $O_k(L^{-1})$.  Substitution in
\eqref{eq:laplace-endpoint} proves \eqref{eq:endpoint-zeros} and absolute
convergence.
\end{proof}

\begin{proof}[Proof of Theorem~\ref{thm:main-intro}]
Let $N=\lfloor x\rfloor$, take $X=N+\tfrac12$, and put
$T=\exp\{(\log X)^2\}$.  Set
\[
 \mathcal J_{\rm full}:=
 \int_{[0,\infty)^k}\mathcal G_{k,T}(X;\boldsymbol u)
 \dd\boldsymbol u,
 \qquad
 \mathcal J_{\rm cube}:=
 \int_{[0,\eta_k]^k}\mathcal G_{k,T}(X;\boldsymbol u)
 \dd\boldsymbol u.
\]
By definition,
$\mathcal J_{\rm cube}=\mathcal J_{\rm full}
-\mathcal T_{k,T}(X;\eta_k)$.  Lemma~\ref{lem:desmoothing} gives
\[
 \mathcal J_{\rm full}
 =D_k^\square(X)+\mathcal E_k^\Delta(X;T)
  +O_{k,M}(X^{-M}),
\]
while Lemma~\ref{lem:large-u} gives the signed identity
\[
 \mathcal T_{k,T}(X;\eta_k)
 =\mathcal T_k^\sharp(X;\eta_k)
  +\mathcal T_k^\Delta(X;T,\eta_k)
  +O_{k,M}(X^{-M}).
\]
Finally, Proposition~\ref{prop:master-contour} evaluates
$\mathcal J_{\rm cube}$.  Combining these three exact relations gives a
single bookkeeping identity, written explicitly so that each error term is
visible:
\begin{align}
 D_k^\square(X)
 ={}&\mathcal I_{1/2}(X,T)+\sum_\rho\mathcal I_\rho(X,T)
 +\mathcal R_k(X)
 +\mathcal T_k^\sharp(X;\eta_k)
 +\mathcal T_k^\Delta(X;T,\eta_k)\notag\\
 &\hspace{24mm}-\mathcal E_k^\Delta(X;T)
 +O_{k,M}(X^{-M}).
 \label{eq:error-ledger-squarefree}
\end{align}
The terms in this ledger satisfy
\begin{align}
 \mathcal R_k(X)
 &\ll_k
 X^{\alpha_k-\kappa_k}(\log X)^{M_k^{\mathrm{rem}}},
 \label{eq:error-cont}\\
 \mathcal T_k^\sharp(X;\eta_k)
 &\ll_k
 X^{\alpha_k-\lambda_k}(\log X)^{M_k^{\mathrm{tail}}},
 \label{eq:error-tail}\\
 \mathcal T_k^\Delta(X;T,\eta_k),\quad
 \mathcal E_k^\Delta(X;T)
 &\ll_k
 X^{1-\frac1{2(k-1)}}(\log X)^{M_k^\Delta}.
 \label{eq:error-diag}
\end{align}
Here and below the finitely many logarithmic exponents may be enlarged
without further comment.

So far the contour calculation has counted only integers with $k$ distinct
prime factors.  To return to the full condition $\Om(n)=k$, we add back the
integers in which some prime repeats.  This is the exact identity
\begin{equation}\label{eq:error-repeated-ledger}
 D_k(X)=D_k^\square(X)+R_k(X),
\end{equation}
where $R_k$ is defined in Lemma~\ref{lem:repeated}.  For $k\geq3$ it
satisfies \eqref{eq:repeated}, and for $k=2$,
$R_2(X)=A(\sqrt X)\ll X^{1/4}\log(2X)$.  The diagonal exponent has the
strict gap
\[
 \alpha_k-\left(1-\frac1{2(k-1)}\right)
 =\frac1{2k(k-1)}>0\qquad(k\geq3),
\]
and for $k=2$ the diagonal exponent $1/2$ and repeated-prime exponent
$1/4$ are both below $\alpha_2=3/4$.  It follows from
\eqref{eq:error-ledger-squarefree}--\eqref{eq:error-repeated-ledger} that
there is $\delta_k>0$ such that the total of all nonprincipal, tail,
diagonal, repeated-prime, and desmoothing errors is
\begin{equation}\label{eq:power-errors-absorbed}
 O_k\left(X^{\alpha_k-\delta_k}(\log X)^{M_k^\ast}\right)
 =o_k\left(\frac{X^{\alpha_k}}{(\log X)^{k+1}}\right).
\end{equation}

Lemma~\ref{lem:endpoint} shows that the pole $z=1/2$ contributes
\[
 -\frac12C_k(1/2)
 \frac{X^{\alpha_k}}{(\log X)^k}
 +O_k\left(\frac{X^{\alpha_k}}{(\log X)^{k+1}}\right).
\]
The identity
\[
 \frac12C_k(1/2)=
 \frac{2^{k-1}k^{2k-1}}{(k-1)!(2k-1)}=B_k
\]
identifies the deterministic term.  The zeros contribute
\[
 -\frac{X^{\alpha_k}}{(\log X)^k}
 \sum_{\rho}m_\rho C_k(\rho)X^{i\gamma/k}
+O_k\left(\frac{X^{\alpha_k}}{(\log X)^{k+1}}\right).
\]
Pairing conjugate zeros gives the series in \eqref{eq:main-intro}.
Moreover,
$\sum_\rho m_\rho|C_k(\rho)|<\infty$ and
$|e^{i\gamma y/k}|=1$ for real $y$; the Weierstrass $M$-test therefore
proves absolute and uniform convergence as a function of $y=\log X$.
Together with \eqref{eq:error-ledger-squarefree} and
\eqref{eq:power-errors-absorbed}, this proves the asserted formula with
$X$ in place of $x$.

Finally, $D_k(x)=D_k(X)$.  The deterministic factor changes by an admissible
$O_k(x^{\alpha_k-1}(\log x)^{-k})$.  For the zero series put
$h=k^{-1}\log(X/x)=O_k(x^{-1})$.  Zero counting and
$C_k(1/2+i\gamma)\ll_k(1+|\gamma|)^{-k}$ give
\[
 \sum_{\rho}m_\rho|C_k(\rho)|
 |e^{i\gamma h}-1|
 \ll_k
 \begin{cases}
 |h|\log^2(2/|h|),&k=2,\\
 |h|,&k\geq3.
 \end{cases}
\]
To see this, use $|e^{it}-1|\leq\min\{2,|t|\}$ and split the
zero sum at $|\gamma|=|h|^{-1}$.  After multiplication by
$x^{\alpha_k}(\log x)^{-k}$, the resulting change is
$O_k(x^{\alpha_k-1})$ for $k=2$ and smaller for $k\geq3$.
It is therefore absorbed by the stated error term.
\end{proof}

\section{Why the deterministic term wins}\label{sec:sign}

The explicit formula contains infinitely many oscillating terms, one from
each zero of $L(s,\chi)$.  Predicting how all of their phases line up would be
far harder than we need.  Instead we use a worst-case bound: replace every
oscillation by its absolute value and add them all.  If even this exaggerated
total is smaller than the fixed deterministic term, then the sign can never
be reversed once the lower-order error is small enough.

The point of the next two lemmas is that no information about the phases of
the zeros is required.  We first add the zero weights
$(1/4+\gamma^2)^{-1}$ exactly, and then compare each coefficient $C_k(\rho)$
with that weight.  This turns an infinite oscillating problem into one
numerical inequality.

Absolute convergence follows immediately from
\begin{equation}\label{eq:C-decay}
 C_k(1/2+i\gamma)\ll_k(1+|\gamma|)^{-k}
\end{equation}
and $N_\chi(T)\ll T\log(2T)$.  We now prove the stronger estimate required
for Corollary~\ref{cor:negative-intro}.

We collect the zeros in one positive quantity, avoiding separate estimates
for the low zeros.  Let
\[
 \Lambda(s):=
 \left(\frac4\pi\right)^{(s+1)/2}
 \Gamma\left(\frac{s+1}{2}\right)\beta(s).
\]
\begin{lemma}[Total zero mass]\label{lem:zero-mass}
Under RH for $L(s,\chi)$,
\begin{equation}\label{eq:zero-mass}
 \mathcal S_\chi:=
 \sum_{\gamma>0}\frac{m_\rho}{1/4+\gamma^2}
 =\frac{\Lambda'}{\Lambda}(1)
 =\frac12\log\frac4\pi-\frac{\gamma_0}{2}
 +\frac{\beta'(1)}{\beta(1)},
\end{equation}
where $\gamma_0$ is Euler's constant.  The alternating series for
$\beta'(1)$ has decreasing terms after its first nonzero term, so
\[
 0<\beta'(1)<\frac{\log3}{3},\qquad \beta(1)=\frac{\pi}{4}.
\]
These bounds imply directly
\begin{equation}\label{eq:S-bound}
 \mathcal S_\chi<\frac13.
\end{equation}
\end{lemma}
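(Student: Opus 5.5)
The plan is to derive \eqref{eq:zero-mass} from the Hadamard factorization of the completed function $\Lambda$, and then to bound the explicit constants numerically. Since $\chi=\chi_{-4}$ is a primitive odd real character with conductor $4$, $\Lambda(s)$ as defined is entire of order $1$. It satisfies the functional equation $\Lambda(s)=\Lambda(1-s)$, with root number $1$ because $\chi$ is real and $\beta(1/2)>0$. Its zeros are exactly the nontrivial zeros $\rho$ of $\beta(s)$. The Hadamard product gives
\[
 \frac{\Lambda'}{\Lambda}(s)=B+\sum_\rho\Bigl(\frac1{s-\rho}+\frac1\rho\Bigr).
\]
Following the standard argument (Davenport, Ch.~12), the functional equation together with the symmetry $\rho\mapsto1-\bar\rho$ of the zero set gives $\Re B=-\sum_\rho\Re(1/\rho)$. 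Here the sum converges absolutely because $\Re(1/\rho)=\beta_\rho/|\rho|^2$. Since $\chi$ is real, the zeros are also symmetric under conjugation, so $B$ is real. Hence
\[
 \frac{\Lambda'}{\Lambda}(s)=\sum_\rho\Re\frac1{s-\rho}
\]
for real $s$. The sum is absolutely convergent, because $\Re\frac{1}{s-\rho}=\frac{s-1/2}{(s-1/2)^2+\gamma^2}$ under RH.

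Take $s=1$. Each zero $\rho=\tfrac12+i\gamma$ contributes $\Re\frac1{1/2-i\gamma}=\frac{1/2}{1/4+\gamma^2}$. Zeros with ordinates $\pm\gamma$ come in pairs with equal multiplicity, and there is no zero at $\gamma=0$ because $\beta(1/2)>0$. Counting each pair once therefore gives $\sum_{\gamma>0}m_\rho/(1/4+\gamma^2)$, which is the first equality. The second equality comes from logarithmic differentiation of the definition of $\Lambda$ at $s=1$:
\[
 \frac{\Lambda'}{\Lambda}(1)=\tfrac12\log\tfrac4\pi+\tfrac12\psi(1)+\frac{\beta'(1)}{\beta(1)},
\]
together with $\psi(1)=-\gamma_0$. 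I expect the main obstacle to be this step, namely justifying $\Re B=-\sum\Re(1/\rho)$ together with the pairing and absolute convergence, rather than any computation. I would quote it from the standard theory of primitive Dirichlet $L$-functions rather than reprove it.

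For the bounds, Leibniz's series gives $\beta(1)=\pi/4$. Next,
\[
 \beta'(1)=-\sum_{n\ \mathrm{odd}}\chi(n)\frac{\log n}{n}=\frac{\log3}{3}-\frac{\log5}{5}+\frac{\log7}{7}-\cdots.
\]
The $n=1$ term vanishes, and $\log t/t$ is decreasing for $t\ge e$, so the terms decrease in absolute value from $n=3$ onward. The alternating-series estimate then yields $0<\beta'(1)<\log3/3$. Substituting these values,
\[
 \mathcal S_\chi<\tfrac12\log\tfrac4\pi-\tfrac{\gamma_0}2+\frac{4\log3}{3\pi}\approx0.1208-0.2886+0.4663=0.2985<\tfrac13,
\]
and elementary rational bounds on $\log(4/\pi)$, $\gamma_0$, $\log 3$ and $\pi$ make this rigorous, which proves \eqref{eq:S-bound}.
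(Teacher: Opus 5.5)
Your proposal is correct and follows essentially the paper's route: the Hadamard factorization of the completed function $\Lambda$ evaluated at $s=1$, logarithmic differentiation with $\psi(1)=-\gamma_0$, the alternating-series bound $0<\beta'(1)<\log 3/3$, and the final numerical check. The only difference is how the linear exponential factor is removed. You use Davenport's identity $\Re B=-\sum_\rho\Re(1/\rho)$, whereas the paper uses the evenness of $\Xi(z)=\Lambda(1/2+z)$ to pair the zeros at $\pm\gamma$ and eliminate that factor directly.
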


\begin{proof}
The completed function $\Lambda$ is entire of order one and satisfies the
symmetry $\Lambda(s)=\Lambda(1-s)$.  Since $\beta(1/2)>0$, there is no
zero at the center $s=1/2$.  Put $\Xi(z)=\Lambda(1/2+z)$.  This function is
even.  Its Hadamard product---the standard factorization of an entire
function in terms of its zeros---allows conjugate zeros to be paired, which
is legitimate because
$\sum_\gamma(1+\gamma^2)^{-1}<\infty$, gives
\[
 \frac{\Xi'}{\Xi}(z)
 =\sum_{\gamma>0}\frac{2zm_\rho}{z^2+\gamma^2}.
\]
Taking $z=1/2$ proves the first equality in \eqref{eq:zero-mass}.
Logarithmic differentiation of the displayed definition of $\Lambda$,
using $\psi=\Gamma'/\Gamma$ and $\psi(1)=-\gamma_0$, proves the second.

Since $t\mapsto(\log t)/t$ decreases for $t\geq3$,
\[
 \beta'(1)=
 \frac{\log3}{3}-\frac{\log5}{5}
 +\frac{\log7}{7}-\cdots
\]
is an alternating series with decreasing terms.  Hence
$0<\beta'(1)<(\log3)/3$, while $\beta(1)=\pi/4$.
Therefore
\[
 \mathcal S_\chi
 <\frac12\log\frac4\pi-\frac{\gamma_0}{2}
   +\frac{4\log3}{3\pi}
 <\frac13.
\]
For example, $\pi>3.14$, $\gamma_0>0.57$, $\log3<1.10$, and
$\log(4/3.14)<0.25$ give the last inequality directly.  This proves
\eqref{eq:S-bound}.
\end{proof}

\begin{lemma}[Coefficient comparison]\label{lem:C-ratio}
For $k\geq2$ and $\rho=1/2+i\gamma$,
\[
 \frac{|C_k(\rho)|}{C_k(1/2)}
 \leq\frac{3/4}{1/4+\gamma^2}.
\]
\end{lemma}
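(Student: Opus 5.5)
The plan is to compute the ratio exactly from the closed form \eqref{eq:B-C-def} and then compare it factor by factor with the target weight. The constant $k^{2k-1}/(k-1)!$ cancels in the quotient. Putting $z=\rho=\tfrac12+i\gamma$ gives $k-1+z=k-\tfrac12+i\gamma$ and $1-z=\tfrac12-i\gamma$. Hence, with $t=\gamma^2$,
\[
 \frac{|C_k(\rho)|}{C_k(1/2)}
 =\left(\frac{(k-\frac12)^2}{(k-\frac12)^2+t}\right)^{1/2}
 \left(\frac{1/4}{1/4+t}\right)^{(k-1)/2}.
\]
Both factors lie in $(0,1]$. The proof then reduces to checking that this product is at most $(3/4)/(1/4+t)$, and I will split according to $k$.

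For $k\ge3$, the exponent $(k-1)/2$ is at least $1$. The second factor is therefore at most $(1/4)/(1/4+t)$, and the first factor is at most $1$. So the ratio is at most $(1/4)/(1/4+t)$, which is below the claimed bound. In this case there is even room to spare by a factor of $3$.

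The case $k=2$ is the only one that is tight, and it is where the constant $3/4$ comes from. Here $k-\tfrac12=\tfrac32$, and the ratio equals
\[
 \frac{3/4}{\sqrt{(9/4+t)(1/4+t)}}.
\]
Since $9/4+t\ge 1/4+t$, the square root is at least $1/4+t$. This gives exactly $(3/4)/(1/4+t)$, with equality only at $t=0$.

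I do not expect a real obstacle. The only care needed is keeping track of the moduli of the two complex linear factors. One also has to note that $C_k(1/2)>0$, so that dividing by it is harmless; this is immediate because both $k-\tfrac12$ and $\tfrac12$ are positive.
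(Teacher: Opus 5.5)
Your proof is correct and is essentially the paper's argument. Like the paper, you compute the ratio of moduli exactly and then split into two cases. For $k\ge3$ you drop the first factor and use that the exponent $(k-1)/2\ge1$. For $k=2$ you use $9/4+t\ge1/4+t$, which is the paper's inequality $(1+t)/(1+t/9)\le9$ in its normalization $t=4\gamma^2$.

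One side remark is wrong, though it does not affect the proof. Equality never holds: at $\gamma=0$ the ratio is $1$ while the bound is $3$. So for $k=2$ the constant $3/4$ is sharp only asymptotically as $|\gamma|\to\infty$, not at $t=0$.
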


\begin{proof}
Put $t=4\gamma^2$.  Directly from \eqref{eq:B-C-def},
\[
 \frac{|C_k(1/2+i\gamma)|}{C_k(1/2)}
 =
 \left(1+\frac{t}{(2k-1)^2}\right)^{-1/2}
 (1+t)^{-(k-1)/2}.
\]
After multiplication by $(1+t)/3$, the right-hand side is at most $1$.
For $k\geq3$ this is immediate because
$(1+t)^{(3-k)/2}\leq1$.  For $k=2$ it follows after squaring from
$(1+t)/(1+t/9)\leq9$.
\end{proof}

\begin{proof}[Proof of Corollary~\ref{cor:negative-intro}]
Since $C_k(1/2)=2B_k$, Lemma~\ref{lem:C-ratio} and
\eqref{eq:S-bound} give
\[
 2\sum_{\gamma>0}m_\rho|C_k(\rho)|
 \leq\frac32C_k(1/2)\mathcal S_\chi
 =3B_k\mathcal S_\chi<B_k.
\]
Consequently, even if every zero term points in the direction most favorable
to cancelling the deterministic term, the expression in braces in
\eqref{eq:main-intro} is still at least the fixed positive number
$B_k-2\sum_{\gamma>0}m_\rho|C_k(\rho)|$.  The outside minus sign 
forces a negative main term.  Since the remaining error is smaller by a
factor of $\log x$, it cannot change the sign once $x$ is sufficiently
large.
\end{proof}

\end{document}